\pgfplotsset{compat=1.15}
\DeclareMathOperator{\identity}{Id}
\newtheorem{thm}{Theorem}[section]
\newtheorem*{thm*}{Theorem}
\newtheorem{cor}[thm]{Corollary}	
\newtheorem*{cor*}{Corollary}	
\newtheorem{coro}{Corollary}
\newtheorem{prop}[thm]{Proposition}
\newtheorem{theorem}{Theorem}
\renewcommand*{\thetheorem}{\Alph{theorem}}
\theoremstyle{definition}
\newtheorem{definition}[thm]{Definition} 	
\newtheorem{rema}[thm]{Remark} 
\newtheorem*{definition*}{Definition}    	
\numberwithin{equation}{section}
\DeclareMathOperator{\dist}{dist}
\DeclareMathOperator{\diam}{diam}
\DeclareMathOperator{\proj}{proj}
\DeclareMathOperator{\reach}{reach}
\DeclareMathOperator{\unp}{Unp}
\DeclareMathOperator{\dgm}{Dgm}
\DeclareMathOperator{\cost}{cost}
\newcommand{\norm}[1]{\left\Vert#1\right\Vert}
\newcommand{\RR}{\mathbb{R}}
\newcommand{\EE}{\mathbb{E}}
\begin{document}



\title[W&R]{On the reach of isometric embeddings into Wasserstein type spaces}


\author[J.~Casado]{Javier Casado$^{\ast}$}

\author[M.~Cuerno]{Manuel Cuerno$^{\ast\ast}$}

\author[J.~Santos-Rodríguez]{Jaime Santos-Rodríguez$^{\ast\ast\ast}$}


\thanks{$^*$Supported in part by the FPU Graduate Research Grant FPU20/01444, and by research grants  
	 MTM2017-‐85934-‐C3-‐2-‐P and PID2021-124195NB-C32
from the Ministerio de Econom\'ia y Competitividad de Espa\~{na} (MINECO)} 

\thanks{$^{\ast\ast}$Supported in part by the FPI Graduate Research Grant PRE2018-084109, and by research grants  
	 MTM2017-‐85934-‐C3-‐2-‐P and PID2021-124195NB-C32
from the Ministerio de Econom\'ia y Competitividad de Espa\~{na} (MINECO)}
\thanks{$^{\ast\ast\ast}$ Supported in part by a Margarita Salas Fellowship CA1/RSUE/2021-00625, and by research grants  
	 MTM2017-‐85934-‐C3-‐2-‐P, PID2021-124195NB-C32
from the Ministerio de Econom\'ia y Competitividad de Espa\~{na} (MINECO)}


\address[J.~Casado]{Department of Mathematics, Universidad Aut\'onoma de Madrid and ICMAT CSIC-UAM-UC3M, Spain}
\email{javier.casadoa@uam.es} 

\address[M.~Cuerno]{Department of Mathematics, Universidad Aut\'onoma de Madrid and CSIC-UAM-UC3M, Spain and Department of Mathematical Sciences, Durham University, UK}
\email{manuel.mellado@uam.es, manuel.mellado-cuerno@durham.ac.uk}

\address[J.~Santos-Rodr\'iguez]{Department of Mathematics, Universidad Aut\'onoma de Madrid, Spain and Department of Mathematical Sciences, Durham University, UK}
\email{jaime.santos@uam.es, jaime.santos-rodriguez@durham.ac.uk}


\date{\today}


\subjclass[2020]{49Q20, 28A33, 30L15, 49Q22, 53C21, 55N31}
\keywords{Wasserstein distance, Optimal transport, Metric geometry, TDA}


\begin{abstract}
We study the reach (in the sense of Federer) of the natural isometric embedding $X\hookrightarrow W_p(X)$ of $X$ inside its $p$-Wasserstein space, where $(X,\dist)$ is a geodesic metric space. We prove that if a point $x\in X$ can be joined to another point $y\in X$ by two minimizing geodesics, then $\reach(x, X\subset W_p(X)) = 0$. This includes the cases where $X$ is a compact manifold or a non-simply connected one. On the other hand, we show that $\reach(X\subset W_p(X)) = \infty$ when $X$ is a CAT(0) space. The infinite reach enables us to examine the regularity of the projection map. Furthermore, we replicate these findings by considering the isometric embedding $X\hookrightarrow W_\vartheta(X)$ into an Orlicz--Wasserstein space, a generalization by Sturm of the classical Wasserstein space. Lastly, we establish the nullity of the reach for the isometric embedding of $X$ into $\dgm_\infty$, the space of persistence diagrams equipped with the bottleneck distance.
\end{abstract}
\setcounter{tocdepth}{1}

\maketitle

\section{Introduction}
The concept of the reach of a subset in Euclidean space was first introduced by Federer in \cite{federer}.  It is  used as a way to measure how much the subset folds in on itself (i.e. how far apart two pieces of the set are in the ambient  space despite them being far in the intrinsic metric of the set).
Loosely speaking (see Definition \ref{def.reach})  a subset $A\subset X$ has positive reach if there is a neighbourhood of any point on $A$ such that every point in this neighbourhood has a unique metric projection into $A$. That is, every $x\in X$ inside that neighbourhood is sent by the projection to its unique nearest point in $A$.

The reach of a subset has been of interest not only for its geometric and topological properties (see for example \cite{lytchakkapovitchreach,lytchakreach2,lytchakreach1}) but also for its application as a useful parameter for manifold learning and topological data analysis (see \cite{harvey,latschevreach} and references therein). In the survey \cite{surveyreach}, the interested reader can also find a summary of some results for sets of positive reach. 

Given a geodesic metric space $(X, \dist),$ one can equip the space of probability measures supported on $X$ with a distance induced by the solutions to  an optimal transport problem.  Usually the cost comes from taking the $p-$power of the distance function, the so called $p-$\textit{Wasserstein spaces.} 
One advantage of considering these ambient spaces is that they  share many geometrical properties with the base space $X$ such as non-branching of geodesics, compactness, and  lower sectional curvature bounds amongst others. 

In this article we focus on determining the reach of the image of the natural isometric embedding, given by mapping each point $x \in X$ to the corresponding Dirac delta $\delta_x\in W_p(X).$ We denote this 
by \(\reach(X\subset W_p(X))\), where $W_p(X)$ is the $p$--Wasserstein space of $X$ (see Section \ref{wassersteintype}).

Our first result shows that the cost considered affects the reach of the embedding significantly:
\begingroup
\def\thetheorem{\ref{reach1wasserstein}}
\begin{theorem}
Let $(X,\dist)$ be a metric space, and consider its $1$--Wasserstein space, $W_1(X).$ Then, for every accumulation point $x\in X$, $\reach(x, X\subset W_1(X)) = 0$. In particular, if $X$ is not discrete, $\reach(X\subset W_1(X)) = 0$.
\end{theorem}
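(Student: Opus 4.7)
The plan is to exploit the particularly simple form of the $1$-Wasserstein distance to a Dirac mass. For any probability measure $\mu\in W_1(X)$ and any $z\in X$, the only coupling between $\mu$ and $\delta_z$ is $\mu\otimes\delta_z$, so
$$W_1(\mu,\delta_z)=\int_X \dist(w,z)\,d\mu(w).$$
This identity is essentially the whole engine of the argument.

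Given an accumulation point $x\in X$, I pick a sequence $(y_n)\subset X\setminus\{x\}$ with $y_n\to x$ and set $\mu_n=\tfrac{1}{2}(\delta_x+\delta_{y_n})$. Applying the formula above, for every $z\in X$,
$$W_1(\mu_n,\delta_z)=\tfrac{1}{2}\dist(x,z)+\tfrac{1}{2}\dist(y_n,z)\geq \tfrac{1}{2}\dist(x,y_n),$$
where the inequality is just the triangle inequality in $X$, with equality attained at both $z=x$ and $z=y_n$. Hence the infimum of $W_1(\mu_n,\delta_z)$ over $z\in X$ equals $\tfrac{1}{2}\dist(x,y_n)$ and is realised simultaneously by the two distinct Diracs $\delta_x$ and $\delta_{y_n}$, so $\mu_n$ lies outside $\unp(X)$.

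Since also $W_1(\mu_n,\delta_x)=\tfrac{1}{2}\dist(x,y_n)\to 0$, every neighbourhood of $\delta_x$ in $W_1(X)$ contains a measure without a unique nearest Dirac, and therefore $\reach(\delta_x,X\subset W_1(X))=0$. The ``in particular'' clause is then immediate, as a non-discrete metric space necessarily admits an accumulation point. The argument meets no real obstacle; the one conceptual observation is that $W_1$-geometry only involves the triangle inequality of $(X,\dist)$ and no geodesic structure, which is precisely why splitting mass equally between $x$ and a nearby point always produces a measure equidistant from two Diracs. One should contrast this with the $p>1$ case, where the strict convexity of $t\mapsto t^p$ will rule out such a cheap construction and force the use of branching or multiple geodesics.
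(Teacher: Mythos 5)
Your proof is correct and follows essentially the same route as the paper: both place the measure $\tfrac12(\delta_x+\delta_y)$ with $y$ close to $x$, use the explicit formula $W_1(\mu,\delta_z)=\tfrac12\dist(z,x)+\tfrac12\dist(z,y)$ together with the triangle inequality to see that $x$ and $y$ are both nearest points, and conclude that the reach at $x$ vanishes. The only cosmetic difference is that you phrase it with a sequence $y_n\to x$ rather than a single $y$ in each $\epsilon$-ball.
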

\addtocounter{theorem}{-1}
\endgroup

Geometric features of $X$ also play an important role. In the presence of multiple geodesics joining the same pair of points we obtain the following:


\begingroup
\def\thetheorem{\ref{teorema22}}
\begin{theorem}
 Let $X$ be a geodesic metric space, and $x\in X$ a point such that there exists another $y\in X$ with the property that there exist at least two different minimising geodesics from $x$ to $y$. Then, for every $p>1,$ \[
\reach(x, X\subset W_p(X))=0.
\]    
In particular, if there exists a point $x\in X$ satisfying that property, $\reach(X\subset W_p(X))=0$ for every $p>1$.
\end{theorem}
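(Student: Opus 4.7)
The plan is to produce, for every $\varepsilon > 0$, a measure $\mu \in W_p(X)$ with $W_p(\mu, \delta_x) < \varepsilon$ that admits at least two distinct nearest Dirac masses in the embedded copy of $X$. Let $L = d(x,y)$ and fix two distinct minimising geodesics $\gamma_1,\gamma_2 \colon [0,L] \to X$ from $x$ to $y$, parametrised by arc length. The natural candidate is the two-point measure
\[
\mu_\alpha := \alpha\,\delta_x + (1-\alpha)\,\delta_y, \qquad \alpha \in (0,1),
\]
for which a direct computation gives $W_p(\mu_\alpha,\delta_x) = (1-\alpha)^{1/p}\,L$. This tends to $0$ as $\alpha \to 1$, so the remaining task is to show that, for $\alpha$ near $1$, the measure $\mu_\alpha$ has a non-unique metric projection onto $\{\delta_z : z \in X\}$.

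The key step is to analyse the function
\[
f_\alpha(z) := W_p(\mu_\alpha,\delta_z)^p = \alpha\,d(x,z)^p + (1-\alpha)\,d(y,z)^p
\]
and identify its minimisers on $X$. Writing $a = d(x,z)$ and $b = d(y,z)$, the triangle inequality gives $a + b \geq L$; since $F(a,b) = \alpha a^p + (1-\alpha) b^p$ is strictly increasing in each variable on $[0,\infty)$, the minimum of $f_\alpha$ must be realised where $a + b = L$. The one-dimensional minimisation of $g(a) = \alpha a^p + (1-\alpha)(L-a)^p$ on $[0,L]$ has, by strict convexity of $t \mapsto t^p$ for $p > 1$, a unique interior solution $s^*(\alpha) = L\beta/(1+\beta)$ with $\beta := ((1-\alpha)/\alpha)^{1/(p-1)}$; moreover the equality case pins the minimisers of $f_\alpha$ on $X$ down to the set of points lying on a minimising geodesic from $x$ to $y$ at arc length $s^*(\alpha)$ from $x$.

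In particular, $\gamma_1(s^*(\alpha))$ and $\gamma_2(s^*(\alpha))$ are both minimisers. Since $s^*(\alpha) \to 0$ as $\alpha \to 1$, and since distinctness of $\gamma_1,\gamma_2$ forces them to separate at arbitrarily small parameters, for $\alpha$ sufficiently close to $1$ these two points are distinct; hence $\mu_\alpha$ admits the two distinct nearest Dirac masses $\delta_{\gamma_1(s^*)}$ and $\delta_{\gamma_2(s^*)}$. This yields $\reach(x, X \subset W_p(X)) \leq W_p(\mu_\alpha, \delta_x) \to 0$, proving the theorem.

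The main obstacle is the equality case characterising the minimisers of $f_\alpha$ on $X$: one must argue that every minimiser of $f_\alpha$ must in fact lie on a minimising geodesic from $x$ to $y$ at the prescribed parameter $s^*(\alpha)$, rather than merely obtaining a lower bound. This conclusion rests crucially on the strict convexity and strict monotonicity of $t \mapsto t^p$ with $p > 1$, which also explains the exclusion of $p = 1$ in the statement (in the $p=1$ case the function $F$ is merely affine along lines of constant $a+b$, and the minimiser set degenerates).
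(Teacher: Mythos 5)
Your construction is the same as the paper's (its Proposition~3.2 followed by the short deduction): take the two--point measure $\mu_\alpha=\alpha\delta_x+(1-\alpha)\delta_y$, show its minimisers over the embedded copy of $X$ are exactly the points lying on minimising geodesics from $x$ to $y$ at the explicit arc length $s^*(\alpha)$, and let $\alpha\to 1$ so that both $s^*(\alpha)$ and $W_p(\mu_\alpha,\delta_x)=(1-\alpha)^{1/p}d(x,y)$ tend to $0$. The reduction to the line $a+b=L$, the strict convexity of $g$, the identification of the equality case with points on minimising geodesics, and the formula for $s^*(\alpha)$ are all correct (your exponent $1/(p-1)$ is in fact the right one; the paper's displayed $t_0$ has a typo).

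The step that does not hold as stated is the claim that ``distinctness of $\gamma_1,\gamma_2$ forces them to separate at arbitrarily small parameters.'' In a general geodesic metric space geodesics may branch: two distinct minimising geodesics from $x$ to $y$ can coincide on an initial segment $[0,s_0]$ with $s_0>0$ and differ only afterwards. For instance, glue an interval $[0,1]$ at its endpoint $1$ to a point $q$ of a circle of circumference $2$, and take $x=0$ and $y$ the antipode of $q$: the two minimising geodesics from $x$ to $y$ agree on all of $[0,1]$. In that example $\gamma_1(s^*(\alpha))=\gamma_2(s^*(\alpha))$ for all $\alpha$ near $1$, so your $\mu_\alpha$ has a \emph{unique} projection; worse, one can check that every measure sufficiently $W_p$--close to $\delta_x$ projects uniquely (its barycentre must lie on the initial segment, where the cost is strictly convex in the arc--length parameter), so the reach at $x$ is actually positive there. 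The paper's proof makes exactly the same silent leap when it reads ``two different points'' off its Proposition~3.2, so your argument is no weaker than the published one; but the assertion you wrote down is the precise point of failure, and the theorem really requires that the two geodesics be distinct arbitrarily close to $x$ (as happens for Riemannian manifolds, where distinct minimising geodesics have distinct initial velocities --- this is what the two corollaries use). Even under that hypothesis, note that you need $\gamma_1(s^*(\alpha))\neq\gamma_2(s^*(\alpha))$ at the \emph{specific} parameter $s^*(\alpha)$; since $s^*$ is continuous in $\alpha$ and sweeps out a full neighbourhood of $0$, this is easily arranged, but it deserves a sentence rather than being absorbed into ``for $\alpha$ sufficiently close to $1$.''
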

\addtocounter{theorem}{-1}
\endgroup

This theorem leads us to obtain two interesting corollaries related to two important classes of manifolds:

\begingroup
\def\thecoro{\ref{corollarycompact}}
\begin{coro}
If $M$ is a compact manifold, then $\reach(x,M\subset W_p(M)) =0$ for every $x\in M$ and every $p>1$. 
\end{coro}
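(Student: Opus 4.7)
The plan is to deduce the corollary directly from Theorem \ref{teorema22}: it suffices to exhibit, for every $x\in M$, a point $y\in M$ joined to $x$ by at least two distinct minimising geodesics.

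First I would produce such a $y$ by a compactness argument. Since $M$ is compact and $d(x,\cdot)\colon M\to\RR$ is continuous, the function attains its maximum at some point $y\in M$; assuming $M$ is not a single point (the remaining case being vacuous), we have $y\neq x$ and $d(x,y)>0$.

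Next I would argue by contradiction, supposing that the minimising geodesic $\gamma\colon[0,d(x,y)]\to M$ from $x$ to $y$ is unique. A classical fact in Riemannian geometry asserts that if $y\neq x$ admits a unique minimising geodesic from $x$, then the distance function $d(x,\cdot)$ is differentiable at $y$ with gradient equal to the unit tangent vector $\dot\gamma(d(x,y))$. Since this gradient has norm one, it cannot vanish, contradicting the fact that $y$ is a global maximum of $d(x,\cdot)$. Hence at least two distinct minimising geodesics connect $x$ and $y$, and Theorem \ref{teorema22} yields $\reach(x,M\subset W_p(M))=0$ for every $p>1$ and every $x\in M$.

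The most delicate ingredient is the differentiability statement used above, since $y$ could a priori be a conjugate point of $x$ along $\gamma$; however, uniqueness of the minimiser already forces $d(x,\cdot)$ to be at least $C^1$ at $y$ with the stated gradient, so the argument goes through and only a citation is needed. If one wishes to avoid this ingredient altogether, a slightly less elementary alternative is to invoke the classical density theorem stating that, on a compact Riemannian manifold, the set of points admitting several minimising geodesics from $x$ is dense in the non-empty cut locus $\mathrm{Cut}(x)$, which furnishes the required $y$ directly.
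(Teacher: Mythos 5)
Your argument is correct and follows essentially the same route as the paper: both take $y$ to be a point at which $\dist(x,\cdot)$ attains its maximum on the compact manifold and conclude via Berger's lemma that $x$ and $y$ are joined by at least two minimising geodesics, after which Theorem \ref{teorema22} applies. The only difference is that the paper simply cites Berger's lemma (do Carmo, Ch.~13, Lemma~4.1), whereas you re-derive it from the differentiability of $\dist(x,\cdot)$ at points admitting a unique minimiser — a valid, if slightly heavier, justification.
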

\addtocounter{coro}{-1}
\endgroup

\begingroup
\def\thecoro{\ref{corollarynotsimply}}
\begin{coro}
If $M$ is a not simply connected complete manifold, then $\reach(x,M\subset W_p(M)) =0$ for every $x\in M$ and $p>1$.
\end{coro}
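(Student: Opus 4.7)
The plan is to reduce this corollary to Theorem~\ref{teorema22} by showing that, for an arbitrary $x\in M$, there exists some $y\in M$ joined to $x$ by at least two distinct minimising geodesics.

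First I would lift the problem to the Riemannian universal cover $\pi:\tilde{M}\to M$, which is again a complete (hence geodesic) Riemannian manifold on which $\pi_1(M)$ acts freely and properly discontinuously by isometries. Fix a lift $\tilde{x}\in\pi^{-1}(x)$. Because $M$ is not simply connected, the orbit $\pi^{-1}(x)$ contains points other than $\tilde{x}$; because the action is properly discontinuous and Hopf--Rinow makes closed balls in $\tilde{M}$ compact, the orbit is discrete and only finitely many of its points lie within any bounded distance of $\tilde{x}$. I may therefore select a nearest non-trivial lift $\tilde{x}'$, at some distance $d>0$, and use completeness to find a minimising geodesic $\tilde{c}:[0,d]\to\tilde{M}$ with $\tilde{c}(0)=\tilde{x}$ and $\tilde{c}(d)=\tilde{x}'$.

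I would then set $\tilde{m}:=\tilde{c}(d/2)$, $y:=\pi(\tilde{m})$, and introduce the two paths $\alpha,\beta:[0,d/2]\to M$ defined by $\alpha(t)=\pi(\tilde{c}(t))$ and $\beta(t)=\pi(\tilde{c}(d-t))$; both connect $x$ to $y$ and have length $d/2$. The identification $\dist_M(x,y)=d/2$ follows by contradiction: if some lift $\tilde{z}\in\pi^{-1}(x)$ satisfied $\dist_{\tilde{M}}(\tilde{z},\tilde{m})<d/2$, then necessarily $\tilde{z}\neq\tilde{x}$ (since $\dist_{\tilde{M}}(\tilde{x},\tilde{m})=d/2$), and the triangle inequality would give $\dist_{\tilde{M}}(\tilde{x},\tilde{z})<d$, contradicting the choice of $\tilde{x}'$. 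Hence $\alpha$ and $\beta$ are both minimising geodesics between $x$ and $y$. Finally, to see $\alpha\neq\beta$, I would note that the loop $\alpha*\bar{\beta}$ lifts, by construction, to the whole of $\tilde{c}$, which ends at $\tilde{x}'\neq\tilde{x}$ and is therefore essential in $\pi_1(M,x)$; if $\alpha$ and $\beta$ coincided this would reduce to the null-homotopic loop $\alpha*\bar{\alpha}$, a contradiction.

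Two distinct minimising geodesics between $x$ and $y$ having been produced, Theorem~\ref{teorema22} delivers $\reach(x,M\subset W_p(M))=0$ for every $p>1$. The main technical step is the verification $\dist_M(x,y)=d/2$; the remaining ingredients (existence of a nearest non-trivial lift, distinctness of the two geodesics) are standard covering space arguments, with completeness entering only through Hopf--Rinow and the existence of $\tilde{c}$.
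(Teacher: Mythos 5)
Your proposal is correct and follows essentially the same route as the paper: pass to the universal cover, pick a nearest non-trivial lift $\tilde{x}'$ of $x$, project a minimising geodesic $\tilde{x}\to\tilde{x}'$ to a geodesic loop at $x$ whose two halves are distinct minimising geodesics to the midpoint, and then invoke Theorem~\ref{teorema22}. Your write-up just makes the verification that $\dist_M(x,y)=d/2$ and the distinctness of the two geodesics more explicit than the paper does.
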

\addtocounter{coro}{-1}
\endgroup

In \cite{kell}, Kell studied several convexity conditions, such as \textit{(resp. strictly, uniformly) $p$--convexity} or  \textit{Busemann},  on the distance of a geodesic metric space and some other more general conditions about metric spaces, such as \textit{reflexivity} (see also Definitions \ref{kelldefinitions1} and \ref{kelldefinitions2}) obtaining existence and uniqueness of \textit{barycenters}, i.e., certain points on the metric space that minimise the distance to a given measure/density. In order to formalise that concept, we define a \emph{barycenter} as a point in $X$ that minimises the distance between a given element from a Wasserstein type space and some isometric embedding of the metric space inside that space. Kell's conditions allow us to determine that the reach is infinite for a broad class of spaces:


\begingroup
\def\thetheorem{\ref{reachpositivowass}}
\begin{theorem}
Let $(X,\dist)$ be a reflexive metric space, Then
the following assertions hold:
    \begin{enumerate}
        \item If $X$ is strictly $p$--convex for $p\in[1,\infty)$ and uniformly $\infty$--convex if $p=\infty$, then\begin{equation}
            \reach(X\subset W_r(X))=\infty\text{ for }r>1.
        \end{equation} 
        \item If $X$ is Busemann, strictly $p$--convex for some $p\in[1,\infty],$ and uniformly $q$--convex for some $q\in[1,\infty]$, then\begin{equation}
            \reach(X\subset W_r(X))=\infty\text{ for }r>1.
        \end{equation}
    \end{enumerate}
\end{theorem}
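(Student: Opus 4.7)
The plan is to reduce the statement to the existence and uniqueness of the $r$-barycenter of every $\mu \in W_r(X)$, and then invoke Kell's barycenter theorems under the two sets of hypotheses.

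First, I would observe that for $\mu \in W_r(X)$ and $x \in X$ one has
\[
W_r(\mu,\delta_x)^r \;=\; \int_X d(y,x)^r\, d\mu(y) \;=:\; F_\mu(x),
\]
so a point $x\in X$ realises the distance from $\mu$ to the image of $X$ inside $W_r(X)$ precisely when it minimises $F_\mu$, that is, precisely when $x$ is an $r$-barycenter of $\mu$. Therefore $\reach(X\subset W_r(X))=\infty$ is equivalent to the statement that every $\mu\in W_r(X)$ admits a unique $r$-barycenter (and hence a unique metric projection onto the embedded copy of $X$). This reduction is the only Wasserstein-specific input; from here everything is a barycenter problem on $(X,\dist)$.

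For existence I would check that $F_\mu$ is continuous on $X$ (using the pointwise estimate $|d(y,x)^r - d(y,x')^r| \le r\,(d(y,x)\vee d(y,x'))^{r-1}\, d(x,x')$ and integrability of the $r$-th moment of $\mu$) and, in particular, lower semicontinuous and bounded below. The reflexivity hypothesis, in Kell's sense, is engineered to guarantee that minimising sequences of lower semicontinuous functionals such as $F_\mu$ have a convergent subnet, so that a minimiser exists. For uniqueness under hypothesis (1), strict $p$-convexity gives strict convexity of $z\mapsto d(z,y)^p$ along every geodesic; since $r>1$, composing with the strictly increasing convex map $t\mapsto t^{r/p}$ (or applying Kell's analogous statement in the $p=\infty$, uniformly $\infty$-convex case) yields strict convexity of $z\mapsto d(z,y)^r$ for every fixed $y\in X$. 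Integrating this against $\mu$ shows that $F_\mu$ is strictly convex along any geodesic joining two candidate minimisers, which forces uniqueness. Under hypothesis (2), one cannot upgrade $p$-convexity to $r$-convexity directly, but the combination of the Busemann property (giving non-branching and convexity of the distance) with strict $p$-convexity and uniform $q$-convexity is exactly the package Kell uses to deduce unique barycenters via a parallelogram-type inequality; I would quote that statement verbatim.

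The main obstacle is the uniqueness part, and more specifically the transfer between Kell's convexity hierarchy $(p,q)$ and the exponent $r$ appearing in $W_r$. In case (1) this is a genuine convexity-calculus step which only works because $r>1$; in case (2) it is really a reading of Kell's theorem. Once uniqueness is established the projection map $W_r(X)\to X$ is well-defined on all of $W_r(X)$, which by the standard equivalent formulation of reach gives $\reach(X\subset W_r(X))=\infty$, and opens the door to the regularity statements for the projection mentioned in the introduction.
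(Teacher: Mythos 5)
Your proposal follows essentially the same route as the paper: reduce $\reach(X\subset W_r(X))=\infty$ to existence and uniqueness of $r$-barycenters of every $\mu\in W_r(X)$, get existence from reflexivity plus $p$-convexity (Kell's Theorem 4.4) and uniqueness from the strict/uniform convexity hypotheses (Kell's Corollary 4.5), with case (2) reduced to case (1) via Kell's Lemma 1.4 (Busemann upgrades strict $p$-convexity for some $p$ to all $p$). One caveat on the single step you argue by hand: the map $t\mapsto t^{r/p}$ is convex only when $r\ge p$, so your composition argument establishing strict convexity of $z\mapsto \dist(z,y)^r$ does not cover the range $1<r<p$; for those exponents you must fall back on Kell's statement itself (as the paper does by citing Corollary 4.5 verbatim) rather than on this power-mean calculation.
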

\addtocounter{theorem}{-1}
\endgroup

We also study properties of the projection map
\textit{projection map}, i.e., \begin{align*}
    \proj_2: W_2(X)&\to X\\
    \mu&\mapsto r_\mu,
\end{align*}that sends each measure to its $2-$barycenter (i.e. the barycenter on the $2$--Wasserstein space), and showing that this map is in fact a submetry for a certain class of spaces.

\begingroup
\def\thetheorem{\ref{thmsubmetry}}
\begin{theorem}
   Let $(\EE^n,\dist)$ the Euclidean space with the canonical distance, then $\proj_2$ is a submetry. 
\end{theorem}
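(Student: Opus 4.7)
The plan is to verify the two defining properties of a submetry for the map $\proj_2 \colon W_2(\EE^n) \to \EE^n$: that it is $1$-Lipschitz, and that every metric ball is sent onto a ball of the same radius. Both will follow almost directly from the fact that in Euclidean space the $2$-barycenter of $\mu \in W_2(\EE^n)$ coincides with the classical center of mass
\[
r_\mu = \int_{\EE^n} x \, d\mu(x),
\]
since $y \mapsto \int |x-y|^2 \, d\mu(x)$ is smooth and strictly convex on $\EE^n$ with unique minimum attained at $\int x \, d\mu(x)$ (well defined because $\mu$ has finite second moment).

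First I would establish that $\proj_2$ is $1$-Lipschitz. Given $\mu, \nu \in W_2(\EE^n)$ and an optimal coupling $\pi$, Jensen's inequality applied to the convex function $|\cdot|^2$ yields
\[
|r_\mu - r_\nu|^2 = \left| \int_{\EE^n \times \EE^n} (x-y) \, d\pi(x,y) \right|^2 \leq \int_{\EE^n \times \EE^n} |x-y|^2 \, d\pi(x,y) = W_2(\mu,\nu)^2.
\]
This immediately gives the inclusion $\proj_2(B_r(\mu)) \subseteq B_r(r_\mu)$.

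Next I would exhibit explicit horizontal lifts to obtain the reverse inclusion. For $\mu \in W_2(\EE^n)$ and any target $z \in B_r(r_\mu)$, put $v = z - r_\mu$ and set $\nu = (T_v)_\# \mu$, where $T_v(x) = x + v$ is the translation. A direct change of variables gives $r_\nu = r_\mu + v = z$, while the coupling $(\identity, T_v)_\# \mu$ shows $W_2(\mu,\nu) \leq |v| = |z - r_\mu|$. Combined with the $1$-Lipschitz bound from the previous step, equality holds: $W_2(\mu,\nu) = |z - r_\mu|$. Therefore every $z \in B_r(r_\mu)$ lies in $\proj_2(B_r(\mu))$, and hence $\proj_2(B_r(\mu)) = B_r(r_\mu)$, which is precisely the submetry condition.

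I do not expect a serious obstacle in this argument; the proof is driven entirely by the interplay between the Euclidean center of mass and the action of translations, which act isometrically on $W_2(\EE^n)$ and can move the barycenter to any prescribed point at exactly the cost of the displacement. The only mildly delicate point is justifying that $r_\mu$ in the Euclidean setting actually equals the center of mass, which follows from differentiating under the integral sign (permissible since $\mu$ has finite second moment) and invoking strict convexity; both yield existence and uniqueness of the $2$-barycenter for free.
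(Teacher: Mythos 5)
Your proof is correct. The lifting half of your argument --- pushing $\mu$ forward by the translation $T_v$ with $v = z - r_\mu$, checking that the barycenter translates accordingly, and bounding $W_2(\mu,(T_v)_\#\mu)\le |v|$ via the coupling $(\identity,T_v)_\#\mu$ --- is essentially identical to the paper's. Where you genuinely diverge is in the first half. The paper proves the theorem in the generality of reflexive Banach spaces with strictly convex norm satisfying Kuwae's property \textbf{B}, so it has no explicit formula for the barycenter: existence and uniqueness are imported from Kell's results on strictly $p$-convex reflexive spaces, and the $1$-Lipschitz property comes from Kell's Proposition 4.8 via the $2$-Jensen inequality. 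You instead exploit the Euclidean structure to identify $r_\mu$ with the center of mass $\int x\,d\mu$ (uniqueness then being免 immediate from strict convexity of $y\mapsto\int|x-y|^2\,d\mu$, which is legitimate since $W_2(\mu,\delta_y)^2$ is exactly that integral), and you obtain the Lipschitz bound in two lines from Jensen's inequality applied to an optimal coupling. For the statement as given (Euclidean $\EE^n$) your route is more elementary and entirely self-contained; the paper's approach buys the more general Banach-space theorem at the cost of Kell's machinery. A minor observation: the identity $r_\mu-r_\nu=\int(x-y)\,d\pi$ uses only the marginal conditions, so it holds for any coupling; optimality of $\pi$ is needed only to convert the right-hand side of Jensen's inequality into $W_2(\mu,\nu)^2$.
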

\addtocounter{theorem}{-1}
\endgroup

Our next results focus on  the embedding into Wasserstein type spaces with more general metrics, such as the Orlicz-Wasserstein spaces defined by Sturm in \cite{sturm} and the space of persistence diagrams, the key tool in Topological Data Analysis \cite{chazalintro}. 

For the Orlicz-Wasserstein spaces we require reasonable assumptions on the cost (stated in the theorem) in order to ensure that the natural embedding using Dirac deltas is indeed isometric. In a similar fashion to the case of $p-$Wasserstein spaces, we obtain:

\begingroup
\def\thetheorem{\ref{reachceroorlicz}}
\begin{theorem}
Let $X$ be a geodesic metric space, and $x\in X$ a point such that there exists another $y\in X$ with the property that there exists at least two different minimising geodesics from $x$ to $y$. Suppose $X$ is isometrically embedded into an Orlicz-Wasserstein space $W_\vartheta(X)$. Then, for every $\varphi$ (as explained in Subsection \ref{subseccionorlicz}) such that $\varphi(t_0) \neq t_0$ for some $t_0>1$, \[
\reach(x, X\subset W_\vartheta(X))=0.
\]    
In particular, if there exists a point $x\in X$ satisfying that property, $\reach(X\subset W_\vartheta(X))=0$ for every $p>1$.
\end{theorem}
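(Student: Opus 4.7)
The plan is to adapt the strategy of Theorem~\ref{teorema22} to the Orlicz--Wasserstein setting. Fix two distinct minimising geodesics $\gamma_1, \gamma_2 \colon [0, L] \to X$ from $x$ to $y$ with $L := d(x, y)$, and for each $\epsilon \in (0, 1)$ consider
\[
\mu_\epsilon \;:=\; (1-\epsilon)\,\delta_x + \epsilon\,\delta_y \;\in\; W_\vartheta(X).
\]
Since $\mu_\epsilon \to \delta_x$ in $W_\vartheta(X)$ as $\epsilon \to 0$, it suffices to exhibit, for every sufficiently small $\epsilon > 0$, two distinct Dirac masses inside the embedded copy of $X$ that both realise $\inf_{z \in X} W_\vartheta(\mu_\epsilon, \delta_z)$.

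Because $\mu_\epsilon$ is supported on two points and $\delta_z$ on one, there is a unique coupling between them. Hence the Orlicz--Wasserstein distance collapses to the explicit one-parameter implicit expression
\[
W_\vartheta(\mu_\epsilon, \delta_z) \;=\; \inf\Bigl\{c > 0 \,:\, (1-\epsilon)\,\vartheta\!\bigl(d(x, z)/c\bigr) + \epsilon\,\vartheta\!\bigl(d(y, z)/c\bigr) \le 1\Bigr\}.
\]
I would then use the monotonicity of $\vartheta$ to reduce the minimisation of $z \mapsto W_\vartheta(\mu_\epsilon, \delta_z)$ to the case where $z$ lies on a minimising geodesic from $x$ to $y$: fixing $s = d(x, z)$ and using the triangle inequality $d(y, z) \geq L - s$, with equality if and only if $z$ lies on such a geodesic, the displayed infimum is strictly smaller in the on-geodesic case. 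The non-degeneracy hypothesis $\varphi(t_0) \neq t_0$ for some $t_0 > 1$ is exactly what makes $\vartheta$ sufficiently non-linear for this inequality to be strict; were $\vartheta$ linear, the Orlicz functional would collapse to the $W_1$ cost and the independent argument of Theorem~\ref{reach1wasserstein} would apply instead.

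Restricted to on-geodesic points with $d(x,z) = s$ and $d(y,z) = L-s$, the problem becomes the one-dimensional implicit minimisation
\[
c^\star(s) \;:=\; \inf\Bigl\{c > 0 \,:\, (1-\epsilon)\,\vartheta(s/c) + \epsilon\,\vartheta((L-s)/c) \le 1\Bigr\}, \qquad s \in [0, L].
\]
A continuity and boundary-behaviour analysis shows that $c^\star$ attains its minimum at some interior point $s^\star(\epsilon) \in (0, L)$, and that $s^\star(\epsilon) \to 0$ as $\epsilon \to 0$. Since $\gamma_1$ and $\gamma_2$ are distinct minimising geodesics, the two points $\gamma_1(s^\star(\epsilon)/L)$ and $\gamma_2(s^\star(\epsilon)/L)$ lie on different minimising geodesics from $x$ to $y$ and are therefore distinct (for $\epsilon$ small), while both realise $\min_{z \in X} W_\vartheta(\mu_\epsilon, \delta_z)$. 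This yields the required non-unique metric projection of $\mu_\epsilon$ onto the embedded $X$, and letting $\epsilon \to 0$ gives measures arbitrarily close to $\delta_x$ with this property, so $\reach(x, X \subset W_\vartheta(X)) = 0$.

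The main obstacle is precisely the implicit definition of $W_\vartheta$ as an infimum over scaling parameters: the clean closed-form $(1-\epsilon)d(x,z)^p + \epsilon d(y,z)^p$ available in the $W_p$ case is no longer at hand, so both the on-geodesic reduction and the subsequent minimisation in $s$ must be performed through the level-set threshold $c^\star$. It is in establishing strictness in these comparisons, and in controlling the behaviour of $c^\star$ under perturbations of $z$ off a minimising geodesic, that the hypothesis $\varphi(t_0) \neq t_0$ plays its essential role.
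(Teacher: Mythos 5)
Your overall strategy coincides with the paper's: take $\mu_\lambda=\lambda\delta_x+(1-\lambda)\delta_y$ with the mass concentrating at $x$, reduce the minimisation of $z\mapsto W_\vartheta(\mu_\lambda,\delta_z)$ to points on minimising geodesics from $x$ to $y$ (this is part (1) of Proposition \ref{proposicion33}), and then argue that the minimum is attained in the \emph{interior} of each geodesic so that two distinct geodesics yield two distinct projections. However, the decisive step is exactly the one you do not carry out: the claim that $c^\star$ attains its minimum at an interior point $s^\star(\epsilon)\in(0,L)$. A ``continuity and boundary-behaviour analysis'' cannot deliver this on its own: continuity on $[0,L]$ only gives a minimiser somewhere, and for $\varphi=\operatorname{Id}$ (the excluded linear case) the minimum of $\lambda s+(1-\lambda)(L-s)$ with $\lambda$ close to $1$ sits at the boundary $s=0$, i.e.\ at $x$ itself, in which case the projection is unique and the argument collapses even when several geodesics exist. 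Ruling out the boundary is precisely where the hypothesis enters, and it requires the quantitative work the paper does in part (2) of Proposition \ref{proposicion33}: computing $W_\vartheta(\delta_x,\mu_\lambda)=\dist(x,y)/\varphi^{-1}\bigl(1/(1-\lambda)\bigr)$ explicitly, using concavity of $\varphi^{-1}$ to replace the implicit level-set condition by a linear one, and then exhibiting an explicit positive range of $s$ for which $\gamma(s)$ does at least as well as $x$, which is where $\varphi^{-1}(t)<t$ for large $t$ is used. One also needs the small bridge (done in the paper's proof of the theorem) from the stated hypothesis $\varphi(t_0)\neq t_0$ for some $t_0>1$, together with convexity and $\varphi(1)=1$, to the condition $\varphi^{-1}(t)<t$ for all $t$ beyond some threshold.

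Relatedly, you misplace where the hypothesis acts. You claim $\varphi(t_0)\neq t_0$ is what makes the on-geodesic reduction strict; in fact that reduction needs only the strict monotonicity of $\varphi$ (compare $\dist(a,x)>\dist(a',x)$ with $\dist(a,y)=\dist(a',y)$ inside the level-set condition), and it holds for every admissible $\varphi$. The non-linearity hypothesis is needed solely to move the minimiser off the endpoint $x$. As written, your proposal asserts the conclusion of the hard step rather than proving it, so there is a genuine gap at the heart of the argument. (Your side claims that $s^\star(\epsilon)\to 0$ and that $\gamma_1(s^\star)\neq\gamma_2(s^\star)$ for small $\epsilon$ are not needed in this form and, in a space where distinct minimising geodesics share an initial segment, the second one would require separate justification; it is safer to argue, as the paper does, that non-uniqueness of the projection of $\mu_\lambda$ follows from having an interior minimiser on each of the two geodesics.)
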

\addtocounter{theorem}{-1}
\endgroup


The last case of isometric embedding into a Wasserstein type space is the one into $\dgm$, the space of persistence diagrams. We can equip $\dgm$ with Wasserstein type distances, involving a minimisation process as in an optimal transport problem. The \textit{bottleneck distance}, $w_\infty$, is one of the most used distances in $\dgm$. Bubenik and Wagner proved in \cite{bubenik} the existence of an isometric embedding of separable and bounded metric spaces into $(\dgm_\infty,w_\infty)$. We have studied the reach of these embeddings:

\begingroup
\def\thetheorem{\ref{reachPD}}
\begin{theorem}
Let $(X,\dist)$ be a separable, bounded metric space and $(\dgm_{\infty},w_\infty)$ the space of persistence diagrams with the bottleneck distance. If $x\in X$ is an accumulation point, then \[
    \reach(x, X\subset \dgm_\infty)=0.\] 
    In particular, if $X$ is not discrete, $\reach(X\subset \dgm_\infty)=0.$
\end{theorem}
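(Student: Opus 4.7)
Denote by $\iota: X \hookrightarrow \dgm_\infty$ the Bubenik--Wagner isometric embedding. Since $x$ is an accumulation point of $X$, fix a sequence $y_n \in X \setminus \{x\}$ with $d_n := \dist(x, y_n) \to 0$. To show $\reach(x, X \subset \dgm_\infty) = 0$ it suffices to produce, for arbitrarily small $\varepsilon > 0$, a diagram $\mu \in \dgm_\infty$ with $w_\infty(\mu, \iota(x)) < \varepsilon$ admitting at least two distinct nearest neighbours in $\iota(X)$.

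The guiding idea is the $\ell^\infty$-flavour of the bottleneck distance
\[
w_\infty(D_1, D_2) = \inf_\eta \sup_{p \in D_1} \|p - \eta(p)\|_\infty ,
\]
where $\eta$ ranges over partial matchings allowed to send points to the diagonal. Because $\ell^\infty$-midpoints in $\RR^2$ fill a line segment rather than a single point, the set of ``midpoint diagrams'' $\mu$ satisfying $w_\infty(\mu, \iota(x)) = w_\infty(\mu, \iota(y_n)) = d_n/2$ is genuinely large. Starting from an optimal matching $\eta_n$ realising $w_\infty(\iota(x), \iota(y_n)) = d_n$, I would build $\mu_n$ by replacing each matched pair by an $\ell^\infty$-midpoint in $\RR^2$, pushing it slightly off the ``straight'' segment joining its endpoints so as to move transversely to the image $\iota(X)$.

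The main step is to verify that $\iota(x)$ and $\iota(y_n)$ are simultaneously nearest points of $\mu_n$ in $\iota(X)$. The triangle inequality already gives $w_\infty(\mu_n, \iota(z)) \geq \dist(x, z) - d_n/2$ for every $z \in X$, so only those $z$ with $\dist(x, z) < d_n$ could potentially beat $d_n/2$; the transverse perturbation is calibrated so that for each such $z$ the distance from $\mu_n$ to $\iota(z)$ still strictly exceeds $d_n/2$. Carrying this out uses the concrete form of the Bubenik--Wagner embedding (which encodes distances from $x$ to a fixed dense sequence as birth--death coordinates), and is where the proof does most of its work.

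The main obstacle I foresee is the potentially pathological accumulation of $\iota(X)$ near $\iota(x)$: since $X$ is merely separable and bounded, many embedded points may lie within distance $d_n$ of $\iota(x)$, so the transverse perturbation has to be chosen carefully enough to evade all of them while preserving the two-point tie between $\iota(x)$ and $\iota(y_n)$. Once this is achieved for $n$ large enough, we obtain $\mu_n \to \iota(x)$ with non-unique nearest point in $\iota(X)$, hence $\reach(x, X \subset \dgm_\infty) = 0$. The global statement ($\reach(X \subset \dgm_\infty) = 0$ when $X$ is not discrete) then follows from the existence of at least one accumulation point in any non-discrete metric space.
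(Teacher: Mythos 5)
Your overall strategy --- produce a ``midpoint'' diagram between $\iota(x)$ and $\iota(y_n)$ that has both as nearest points in $\iota(X)$, then let $y_n \to x$ --- is exactly the paper's strategy. But the decisive step is missing. Your triangle-inequality bound $w_\infty(\mu_n,\iota(z)) \ge \dist(x,z) - d_n/2$ only eliminates competitors $z$ with $\dist(x,z) \ge d_n$; for the remaining $z$ you appeal to a ``transverse perturbation'' that is ``calibrated'' to evade every such $\iota(z)$, and you yourself flag this as where the proof does most of its work and as potentially obstructed by pathological accumulation of $\iota(X)$ near $\iota(x)$. Nothing in the proposal shows such a calibration exists, and it is not clear how to carry it out: there may be infinitely many $z$ within $d_n$ of $x$, and a single perturbed diagram must simultaneously keep the exact two-way tie at $d_n/2$ with $\iota(x)$ and $\iota(y_n)$ while strictly exceeding $d_n/2$ against all of them. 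As written this is a genuine gap, not a routine verification.

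The gap closes, and no perturbation is needed, if you pick the \emph{right} midpoint: the diagram
\[
P=\Bigl\{\bigl(2c(k-1),\,2ck+\tfrac{\dist(x,x_k)+\dist(y,x_k)}{2}\bigr)\Bigr\}_{k=1}^{\infty},
\]
i.e.\ the coordinatewise average of the two embedded distance functions. Since any matching that pairs points on different vertical lines (or leaves points unmatched) costs at least $c$, the bottleneck distance from $P$ to any $\iota(z)$ is the supremum over $k$ of $\bigl|\dist(z,x_k)-\tfrac{\dist(x,x_k)+\dist(y,x_k)}{2}\bigr|$. Taking $x_k$ arbitrarily close to $z$ (density of $\{x_k\}$) gives
\[
w_\infty(\iota(z),P)\;\ge\;\frac{\dist(x,z)+\dist(y,z)}{2}\;\ge\;\frac{\dist(x,y)}{2},
\]
valid for \emph{every} $z\in X$ with no case split on $\dist(x,z)$, while $w_\infty(\iota(x),P)=w_\infty(\iota(y),P)=\dist(x,y)/2$. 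The ``pathological accumulation'' you worry about is thus a non-issue for this choice of midpoint; the triangle inequality in $X$ does all the work. I recommend replacing the perturbation argument with this explicit construction.
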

\addtocounter{theorem}{-1}
\endgroup

The paper is organized as follows: In Section \ref{metricdefinitions}, we state the necessary technical definitions that we need. Section \ref{wassersteintype} is devoted to present the Wasserstein type spaces we use and some of their properties. Sections \ref{reachwasserstein}, \ref{reachwassersteinorliz} and \ref{reachpersistence} contain the results about the reach of spaces embedded into their $p-$Wasserstein space, their Orlicz--Wassertein space and the persistence diagram space respectively.

The authors would like to express their sincere gratitude to Professor Luis Guijarro for his invaluable comments and insights during the elaboration of this paper. They would also like to extend their appreciation to Professors Fernando Galaz-García and David González for their enlightening discussions and contributions to the final manuscript.

Additionally, the authors wish to acknowledge the Department of Mathematical Sciences at Durham University for their warm hospitality and the excellent working conditions provided during the final months of preparing this paper.

\section{Preliminaries}{\label{metricdefinitions}}
\subsection{Reach}
First we recall the definition of the reach of a subset of a metric space. 
\begin{definition}[Unique points set and reach, \cite{federer}]\label{def.reach}
    Let $(X, \dist)$ be a metric space and $A\subset X$ a subset. We define the set of points having a unique metric projection in $A$ as
    \[\unp(A) = \{x \in X : \text{there exists a unique $a$ such that }  \dist(x,A) = \dist(x,a)\}.\]
    For $a\in A$, we define the \emph{reach} of $A$ at $a$, denoted by $\reach(a, A)$, as
    \[\reach(a, A) = \sup \{ r\ge 0 : B_r(a) \subset \unp(A)\}.\]
    Finally, we define the \emph{global reach} by 
    \[\reach(A) = \inf_{a\in A} \reach(a, A).\]
\end{definition}
The intuitive idea is that $\reach(A)=0$ if and only if we do \textit{not} have an $\varepsilon$--neighbourhood of $A$ admitting a metric projection into $A$. Conversely, $\reach(A)=\infty$ will occur if and only if the entirety of $X$ admits a metric projection into $A$.

\subsection{CAT(0) spaces} We recall the definition of a CAT(0) metric space.

\begin{definition}
    A complete metric space $(X,\dist)$ is CAT(0) if for all $z$, $y \in X$ there exists $m\in X$ such that for all  $x\in X$,
    \[\dist(x,m)^2\leq \cfrac{\dist(x,y)^2 + \dist(x,z)^2 }2 - \cfrac{\dist(y,z)^2}4.\]
\end{definition}
This is a generalization of the concept of nonpositive curvature for Riemannian manifolds to metric spaces. So, in particular, the Euclidean space or the hyperbolic space are examples of CAT(0) spaces. A few basic properties of these spaces are:
\begin{enumerate}
    \item For any two points in $X$, there exists a unique geodesic segment between them.
    \item $X$ is simply connected.
\end{enumerate}

\subsection{General metric definitions}

Most of the definitions in this section appear in \cite[Section 1]{kell}. First, we recall the well known definition of existence of \textit{midpoints}:

\begin{definition}[Midpoints]
    We say that $(X,\dist)$ admits \textit{midpoints} if , for every $x,y\in X$, there is $m(x,y)\in X$ such that \[\dist(x,m(x,y))=\dist(y,m(x,y))=\frac12\dist(x,y).\]
\end{definition}

This technical detail allows us to present the following definitions:

\begin{definition}[$p$--convex, $p$--Busemann curvature  and uniformly $p$--convex]{\label{kelldefinitions1}}
Let $(X,\dist)$ be a metric space that admits midpoints.
\begin{enumerate}
    \item $X$ is \textit{$p$--convex} for some $p\in[1,\infty]$ if, for each triple $x,y,z\in X$ and each midpoint $m(x,y)$ of $x$ and $y$, \[
    \dist(m(x,y),z)\leq\left(\frac12\dist(x,z)^p+\frac12\dist(y,z)^p\right)^{1/p}.
    \]The space $X$ is called \textit{strictly $p$--convex} for $p\in(1,\infty]$ if the inequality is strict for $x\neq y$ and \textit{strictly $1$--convex} if the inequality is strict whenever $\dist(x,y)>|\dist(x,z)-\dist(y,z)|$.
    \item $X$ satisfies the \textit{$p$--Busemann curvature condition} if, for all $x_0,x_1,y_0,y_1\in X$ with midpoints $m_x=m(x_0,x_1)$ and $m_y=m(y_0,y_1)$, \[
    \dist(m_x,m_y)\leq\left(\frac12\dist(x_0,y_0)^p+\frac12\dist(x_1,y_1)^p\right)^{1/p}
    \]for some $p\in[1,\infty]$. If $X$ satisfies the $p$--Busemann condition, we say that $(X,\dist)$ is \textit{$p$--Busemann}. In particular, if $p=1$, we say that $(X,\dist)$ is \textit{Busemann}.

    It turns out that $(X,\dist)$ is a Busemann space if and only if \[
    \dist(m(x,z),m(x,y))\leq\frac12\dist(z,y).
    \]
    \item $X$ is \textit{uniformly $p$--convex} for some $p\in[1,\infty]$ if, for all $\epsilon>0$, there exists $\rho_p(\epsilon)\in(0,1)$ such that, for every $x,y,z\in X$ satisfying \[
    \dist(x,y)>\epsilon\left(\frac12\dist(x,z)^p+\frac12\dist(y,z)^p\right)^{1/p}\text{, for some }p>1,
    \]or \[
    \dist(x,y)>|\dist(x,z)-\dist(y,z)|+\epsilon\left(\frac12\dist(x,z)+\frac12\dist(y,z)\right)\text{, for }p=1,
    \]the following inequality holds: \[
    \dist(m(x,y),z)\leq(1-\rho_p(\epsilon))\left(\frac12\dist(x,z)^p+\dist(y,z)^p\right)^{1/p}.
    \]For example, every $\mathrm{CAT}(0)$--space is uniformly $2$--convex.
\end{enumerate}
\end{definition}

\begin{rema}
    By \cite[Lemma 1.4., Corollary 1.5.]{kell}, the following assertions hold: \begin{itemize}
        \item A uniformly $p$--convex metric space is uniformly $p'$--convex for all $p'\geq p$.
        \item Assume $(X,\dist)$ is Busemann. Then $(X,\dist)$ is strictly (resp. uniformly) $p$--convex for some $p\in[1,\infty]$ if and only if it is strictly (resp. uniformly) $p$--convex for all $p\in[1,\infty]$.
        \item Any $\mathrm{CAT}(0)$--space is both Busemann and uniformly $2$--convex, thus uniformly $p$--convex for every $p\in[1,\infty]$.
    \end{itemize}
\end{rema}

In order to apply some of Kell's results, we introduce the notion of \textit{reflexivity} on metric spaces.

\begin{definition}[Reflexive metric space, Definition 2.1. \cite{kell}]{\label{kelldefinitions2}}
    Let $I$ be a directed set. A metric space $(X,\dist)$ is \textit{reflexive} if, for every non--increasing family $\{C_i\}_{i\in I}\subset X$ of non--empty bounded closed convex subsets (i.e. $C_i\subset C_j$ whenever $i\geq j$), we have \[
    \bigcap_{i\in I}C_i\neq\emptyset.
    \]
\end{definition}

\section{Wasserstein type spaces and distances}{\label{wassersteintype}}

In this section we will recall the standard notions of optimal transport and Wasserstein distance. Then we will provide an introduction to the Orlicz--Wasserstein spaces initially proposed by Sturm in \cite{sturm}. Finally, we present our last Wasserstein-type space, the one formed by persistence diagrams, the key element in the field of Topological Data Analysis \cite{chazalintro}.

\subsection{Wasserstein space}

From now on, $X$ will be a metric space with distance function $\dist$. Denote by $\mathcal P(X)$ the set of probability measures on $X$ and by $\mathcal P _p (X)$ the probability measures with finite $p$-moment, i.e.
\[
\mathcal P_p(X) := \{ \mu \in \mathcal P(X) : \int_X \dist(x, x_0)^p d \mu(x) < \infty   \text{ for some $x_0\in X$} \}.
\]

\begin{definition}[Transference plan]
A \emph{transference plan} between two positive measures $\mu, \nu \in \mathcal P(X)$ is a finite positive measure $\pi \in \mathcal P (X \times X)$ which satisfies that, for all Borel subsets $A, B$ of $X$,
\[ \pi(A\times X) = \mu(A), \quad \text{and} \quad \pi(X \times B) = \nu(B). \]
\end{definition}
Note that we require $1=|\mu| = | \nu| = \pi( X \times X)$, so we are not considering all measures of the product space. We denote by $\Gamma(\mu, \nu)$ the set of transference plans between the measures $\mu$ and $\nu$. Then, we define the $p$--Wasserstein distance for $p\ge 1$ between two probability measures as
\[
W_p( \mu, \nu) := \left( \min_{\pi \in \Gamma(\mu, \nu)} \int_{X \times X }\dist(x,y)^p d\pi (x, y) \right) ^{\frac{1}{p}} .
\]
The metric space $(\mathcal P_p (X) , W_p)$ is denoted as the \emph{$p$--Wasserstein space of $X$.}

It is easy to see that, for $x, y \in X$, $W_p(\delta_x, \delta_y) = \dist(x,y)$. Therefore the inclusion $x \mapsto \delta_x$ is an isometric embedding of $X$ inside $W_p(X)$. 

\begin{rema}
    When we are calculating $W_p(\delta_x, \mu)$, there exists only one pairing $\pi = \delta_x \otimes \mu \in \Gamma(\delta_x, \mu)$ between a delta and a general probability measure. Therefore the Wasserstein distance can be easily computed by
    \[
    W_p^p(\delta_x, \mu) = \int_X \dist(x, y) ^p d\mu(y).
    \]
    In particular, fixing $a, x, y\in X$, and $0 \le \lambda \le 1$, we have
    \[
    W_p^p(\delta_a, \lambda \delta_x + (1-\lambda) \delta_y) = \lambda \dist(a, x)^p + (1-\lambda)
    \dist (a, y) ^p.
    \]
\end{rema}

\subsection{Orlicz--Wasserstein space}\label{subseccionorlicz}

Let $\vartheta: \RR^+ \to \RR^+$ be a strictly increasing, continuous function. Assume $\vartheta$ admits a representation $\vartheta = \varphi \circ \psi$ as a composition of a convex and a concave function $\varphi$ and $\psi$, respectively. This includes all $\mathcal C^2$ functions \cite[Example 1.3.]{sturm}.

\begin{definition}[$L^\vartheta$--Wasserstein space and distance]
    Let $(X, \dist)$ be a complete separable metric space. The $L^\vartheta$--Wasserstein space $\mathcal P_\vartheta (X) $ is defined by all probability measures $\mu$ in $X$ such that
    \[ \int_X \varphi \left(\frac{1}{t} \psi(\dist(x,y))\right) d \mu(x) < \infty.\]
    The $L^\vartheta$--Wasserstein distance of two probability measures $\mu, \nu \in \mathcal P_\vartheta (X)$ is defined as
    \[
    W_\vartheta (\mu, \nu) = \inf \left\{ t>0 : 
    \inf_{\pi \in \Gamma(\mu, \nu) } \int_{X\times X} \varphi \left( \frac1t \psi(\dist(x,y))\right) d\pi(x,y)  \le 1  \right\}.
    \]
\end{definition}
The function $W_\vartheta$ is a complete metric on $\mathcal P_\vartheta(X)$ (see \cite{sturm}, Proposition 3.2).
The metric space $(\mathcal P_\vartheta (X), W_\vartheta)$ is known as the \textit{$\vartheta$-Orlicz--Wasserstein space of $X$}.

Notice that for every $x\in X$, the probability measure $\delta_x$ belongs to $\mathcal P_\vartheta(X)$. Therefore, we can embed the metric space $X$ inside its Orlicz--Wasserstein space by mapping $x \mapsto \delta_x$. In addition, this map is an isometric embedding if and only if $\psi \equiv \operatorname{Id}$ and $\varphi(1) = 1$.

\subsection{Space of persistence diagrams and bottleneck distance}

We will now define $(\dgm_p,w_p)$ the space of persistence diagrams with a Wasserstein metric. For that purpose we being with the basic notion of the elements of our metric space:

\begin{definition}[Persistence diagram, \cite{bubenik}]
    A \textit{persistence diagram} is a function from a countable set $I$ to $\RR^2_<$, i.e. $D\colon I\to\RR^2_<$, where $\RR^2_<=\{(x,y)\in\RR^2\colon x<y\}$.
\end{definition}

\begin{rema}
    In this definition, all the points have multiplicity one. Other authors suggest considering persistence diagrams as multisets of points, i.e. sets of points where we can repeat points (see \cite{guijarro1,guijarro2, Mileyko2011,turnerfrechet}). This consideration is closer to the performance of the persistence diagrams in the TDA setting as various homological features can have the same birth and death.  
    Also, in \cite{guijarro1,guijarro2}, the authors extend the notion of persistence diagrams beyond the Euclidean setting and present a general definition for points in metric spaces.
\end{rema}

Once we have the points of our metric space, we want to define distance functions on it. For that purpose, first, we present two preliminary definitions.

\begin{definition}[Partial matching, \cite{bubenik}]
    Let $D_1\colon I_1\to\RR^2_<$ and $D_2:I_2\to\RR_<^2$ be persistence diagrams. A \textit{partial matching} between them is a triple $(I_1',I_2',f)$ such that $I_1'\subseteq I_1$, $I_2'\subseteq I_2$, and $f\colon I_1'\to I_2'$ is a bijection.
\end{definition}

In the same spirit of the original $p$--Wasserstein distance, we want to define a new one between persistence diagrams $D_1$ and $D_2$ as the minimal cost of a partial matching between them. In particular, the cost of a partial matching will be the $\ell^p$ norm of distances between matched pairs and the distances between unmatched pairs and $\Delta$, as Bubenik stated on \cite[Section 2.1.]{bubenik} , where $\Delta$ denotes the diagonal in $\RR^2$.

\begin{definition}[Cost of a partial matching, \cite{bubenik}]
    Let $D_1\colon I_1\to\RR^2_<$ and $D_2\colon I_2\to\RR^2_<$ be persistence diagrams and $(I_1',I_2',f)$ a partial matching between them. We endow $\RR^2$ with the infinity metric $\dist_\infty(a,b)=\norm{a-b}_{\infty}=\max(|a_x-b_x|,|a_y-b_y|)$. Observe that, for $a\in\RR^2_<$, we have that $\dist_\infty(a,\Delta)=\inf_{t\in\Delta}\dist_\infty(a,t)=(a_y-a_x)/2$. We denote by $\cost_p(f)$ the \textit{$p$--cost of $f$}, defined as follows. For $p<\infty$, let \[
    \cost_p(f)=\left(\sum_{i\in I_1'}\dist_\infty(D_1(i),D_2(f(i)))^p+\sum_{i\in I_1\backslash I_1'}\dist_{\infty}(D_1(i),\Delta)^p+\sum_{I_2\backslash I_2'}\dist_\infty(D_2(i),\Delta)^p\right)^{1/p},
    \]and for $p=\infty$, let \[
    \cost_\infty(f)=\max\left\{\sup_{i\in I_1'}\dist_\infty(D_1(i),D_2(f(i))),\sup_{i\in I_1\backslash I_1'}\dist_\infty(D_1(i),\Delta),\sup_{i\in I_2\backslash I_2'}\dist_\infty(D_2(i),\Delta)\right\}.
    \]
    If any of the terms in either expression is unbounded, we declare the cost to be infinity.
\end{definition}

Now we can define the distance functions and the metric space of persistence diagrams:

\begin{definition}[$p$--Wasserstein distance and bottleneck distance of persistence diagrams, \cite{cohenbottleneck}]
    Let $1\leq p\leq\infty$ and $D_1$, $D_2$ persistence diagrams. Define \[
    \tilde{w}_p(D_1,D_2)=\inf\{\cost_p(f)\colon f\text{ is a partial matching between }D_1\text{ and }D_2\}.
    \]Let $(\dgm_p,w_p)$ denote the metric space of persistence diagrams $D$ such that $\tilde{w}_p(D,\emptyset)<\infty$ with the relation $D_1\sim D_2$ if $\tilde{w}_p(D_1,D_2)=0$, where $\emptyset$ is the unique persistence diagram with empty indexing set. The metric $w_p$ is called the \textit{$p$--Wasserstein distance} and $w_\infty$ is called the \textit{bottleneck distance}.
\end{definition}

\section{Reach of the Wasserstein space}{\label{reachwasserstein}}

The $p$--Wasserstein space of a metric space $(X,\dist)$ gives us an example of a well-known isometric embedding between a total space and an infinite dimensional space (in this sense, there also exists the Kuratowski embedding between a compact metric space $(Y,\dist_Y)$ and the space of functions $L^\infty(Y)$).


We first recall the definition of the \textit{set of unique points} and \textit{reach}:

\begin{definition}[Unique points set and reach, \cite{federer}]
    Let $(X, \dist)$ be a metric space and $A\subset X$ a subset. We define the set of points having a unique metric projection in $A$ as
    \[\unp(A) = \{x \in X : \text{there exists a unique $a$ such that }  \dist(x,A) = \dist(x,a)\}.\]
    For $a\in A$, we define the \emph{reach} of $A$ at $a$, denoted by $\reach(a, A)$, as
    \[\reach(a, A) = \sup \{ r\ge 0 : B_r(a) \subset \unp(A)\}.\]
    Finally, we define the \emph{global reach} by 
    \[\reach(A) = \inf_{a\in A} \reach(a, A).\]
\end{definition}

The set of unique points of the isometric embedding of a metric space into its $p$--Wasserstein space is dense into the total space:

 \begin{prop}
        Let $(X,\dist)$ be a non-branching metric space and $W_p(X)$ with $p>1$ its $p$--Wasserstein space. Then the set of unique points $\unp(X\subset W_p(X))$ is dense in $W_p(X)$.
    \end{prop}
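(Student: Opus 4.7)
The plan is to approximate any $\mu\in W_p(X)$ first by finitely supported measures and then to perturb such a measure by averaging it with a Dirac delta placed at one of its own $p$-barycentres, producing a measure that is simultaneously close to $\mu$ and has a unique nearest element in $X\hookrightarrow W_p(X)$.

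First I would reduce to the finitely supported case: since measures of the form $\mu_N=\sum_{i=1}^N\lambda_i\delta_{x_i}$ are $W_p$-dense in $W_p(X)$, it is enough to show that every such $\mu_N$ lies in the closure of $\unp(X\subset W_p(X))$. For $\mu=\sum_i\lambda_i\delta_{x_i}$ consider the continuous, coercive function
\[
f_\mu(x):=W_p^p(\delta_x,\mu)=\sum_i\lambda_i\dist(x,x_i)^p,
\]
which tends to $+\infty$ as $x$ leaves any bounded set. Assuming that a global minimiser $x^*\in X$ exists (see the discussion of obstacles below), write $m:=f_\mu(x^*)$.

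Next I would set $\mu_\epsilon:=(1-\epsilon)\mu+\epsilon\,\delta_{x^*}$ for $\epsilon\in(0,1)$. The transference plan that leaves the mass $(1-\epsilon)\mu$ in place and transports the remaining $\epsilon$-fraction of $\mu$ onto $x^*$ witnesses $W_p^p(\mu,\mu_\epsilon)\le \epsilon\,m$, so $\mu_\epsilon\to\mu$ in $W_p$ as $\epsilon\to 0^+$. To verify that $\mu_\epsilon\in\unp(X\subset W_p(X))$ I compute
\[
f_{\mu_\epsilon}(x)=(1-\epsilon)f_\mu(x)+\epsilon\,\dist(x,x^*)^p\ge(1-\epsilon)m,
\]
with equality iff $f_\mu(x)=m$ and $\dist(x,x^*)=0$ simultaneously, that is, iff $x=x^*$. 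Hence $\delta_{x^*}$ is the unique element of $X\hookrightarrow W_p(X)$ realising the distance from $\mu_\epsilon$ to $X$, and therefore $\mu_\epsilon\in\unp(X\subset W_p(X))$; combined with the finite-support approximation this yields density.

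The main obstacle I anticipate is the existence of the minimiser $x^*$: in a proper metric space it follows immediately from coercivity of $f_\mu$, but in a general non-branching $X$ the infimum need not be attained. A careful version of the argument would instead work with a near-minimiser $x^*$ satisfying $f_\mu(x^*)<m+\delta$ for $\delta\ll\epsilon^p$, which confines any competing minimiser of $f_{\mu_\epsilon}$ to a small ball around $x^*$; it is exactly at this step that I would expect the non-branching hypothesis and the strict convexity of $t\mapsto t^p$ for $p>1$ to be used to rule out competitors in that ball.
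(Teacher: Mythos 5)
Your route is genuinely different from the paper's. The paper fixes a barycenter $x$ of $\mu$, takes $\nu$ on a geodesic from $\delta_x$ to $\mu$, and argues that a second barycenter $z$ of $\nu$ would also be a barycenter of $\mu$ and would force a branching geodesic in $W_p(X)$ — so it leans on the (nontrivial, cited-elsewhere) fact that non-branching of $X$ passes to $W_p(X)$ for $p>1$. You instead perturb $\mu$ by mixing in a Dirac mass at a barycenter $x^*$, and your equality-case computation
\[
W_p^p(\delta_x,\mu_\epsilon)=(1-\epsilon)f_\mu(x)+\epsilon\,\dist(x,x^*)^p\ge(1-\epsilon)m,
\]
with equality forcing $\dist(x,x^*)=0$, is correct and pleasantly self-contained: it uses neither non-branching nor $p>1$, so where it applies it proves a strictly stronger statement than the proposition (and it even gives a quantitative rate $W_p^p(\mu,\mu_\epsilon)\le\epsilon m$). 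The reduction to finitely supported measures is actually superfluous for the main argument — it only matters for your attempt to produce $x^*$.

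The genuine gap is exactly the one you flag: the existence of the minimiser $x^*$. To be fair, the paper's own proof starts from ``let $\mu$ be a measure with $x$ a barycenter,'' so it silently assumes the same thing; but your proposed repair does not close the hole. With a $\delta$-near-minimiser, the inequality $\epsilon\,\dist(x,x^*)^p\le(1-\epsilon)\delta$ confines all competitors of $x^*$ to a small ball, but that neither shows that $\inf_x f_{\mu_\epsilon}(x)$ is attained (in a non-proper space a coercive continuous function can fail to achieve its infimum, even on a small ball) nor that it is attained at a single point — and by the paper's Definition~\ref{def.reach}, non-attainment already excludes $\mu_\epsilon$ from $\unp(X)$. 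I also do not see how non-branching could rescue this step: it is a statement about geodesics, and two distinct near-minimisers of $f_{\mu_\epsilon}$ in a small ball produce no branching configuration. So either you must add a hypothesis guaranteeing barycenters (e.g.\ $X$ proper, where your argument is complete and cleaner than the paper's), or you need a genuinely new idea for the existence step; the near-minimiser device as described does not supply one.
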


    \begin{proof}
         Let $\mu \in W_p(X)$ be a measure with $x \in X$ a barycenter. Take $\nu$ inside a  geodesic between $\delta_0$ and $\mu.$ Suppose that there exists some other point $z\in X$ that is as barycenter for $\nu.$ This implies that $W_p(\nu, \delta_z) \leq W_p(\nu, \delta_x)$ and with this we get
        \[W_p(\mu,\delta_z)\leq W_p(\mu,\nu)+W_p(\nu,\delta_z)\leq W_p(\mu,\nu)+W_p(\nu,\delta_x)= W_p(\mu,\delta_x).  \]
        So then $z$ is also a barycenter for $\mu.$ Furthermore we notice that there is a branching geodesic joining $\mu$ with $\delta_z.$ This gives us the contradiction as $W_p(X)$ is non-branching.
        Then $\nu$ is a measure in $\unp(X\subset W_p(X))$ which can be taken arbitrarily close to $\mu.$
    \end{proof}

    This density fact motivates the question of the existence of metric spaces with positive reach into its $p$--Wasserstein space.

\subsection{Null reach}
The first result of this paper is that the reach of a metric space inside its $1$--Wasserstein space is always 0. The proof follows the idea of the proof of Theorem 1.6. of \cite{cuerno}.

\begin{thm}\label{reach1wasserstein}
   Let $(X,\dist)$ be a metric space, and consider its $1$--Wasserstein space, $W_1(X).$ Then, for every accumulation point $x\in X$, $\reach(x, X\subset W_1(X)) = 0$. In particular, if $X$ is not discrete, $\reach(X\subset W_1(X)) = 0$.
\end{thm}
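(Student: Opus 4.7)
The plan is to exhibit, in every $W_1$-neighbourhood of $\delta_x$, a probability measure $\mu$ that possesses more than one closest point in the image of $X$ under the embedding $y\mapsto \delta_y$. The key tool is the identity noted in the preliminaries: if $\mu=\lambda\delta_u+(1-\lambda)\delta_v$ is a two-atom measure, then for any $z\in X$ the unique transference plan between $\delta_z$ and $\mu$ gives
\[
W_1(\delta_z,\mu)=\lambda\,\dist(z,u)+(1-\lambda)\,\dist(z,v).
\]
So all distances from $\delta_z$ to such two-atom measures are trivially computable, without solving any real optimization problem.

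The concrete construction is as follows. Since $x$ is an accumulation point, fix a sequence $y_n\to x$ with $y_n\neq x$, and for each $n$ set
\[
\mu_n=\tfrac12\delta_x+\tfrac12\delta_{y_n}.
\]
First I would compute $W_1(\delta_x,\mu_n)=\tfrac12\dist(x,y_n)$ and $W_1(\delta_{y_n},\mu_n)=\tfrac12\dist(x,y_n)$ using the displayed identity. Then, for an arbitrary competitor $z\in X$, the same identity combined with the triangle inequality on $\dist$ gives
\[
W_1(\delta_z,\mu_n)=\tfrac12\dist(z,x)+\tfrac12\dist(z,y_n)\ \geq\ \tfrac12\dist(x,y_n),
\]
so the minimum of $W_1(\delta_z,\mu_n)$ over $z\in X$ is $\tfrac12\dist(x,y_n)$ and it is attained at both $z=x$ and $z=y_n$. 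Hence $\mu_n\notin \unp(X\subset W_1(X))$.

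To conclude, observe that $W_1(\delta_x,\mu_n)=\tfrac12\dist(x,y_n)\to 0$, so every ball $B_r(\delta_x)\subset W_1(X)$ with $r>0$ contains some $\mu_n$ outside $\unp(X\subset W_1(X))$. By the definition of reach at a point this gives $\reach(x,X\subset W_1(X))=0$, and taking the infimum over accumulation points yields the global statement when $X$ is not discrete.

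There is no real obstacle in this argument: the absence of a genuine optimization step is exactly what distinguishes $W_1$ from $W_p$ with $p>1$, where a convex combination of two deltas would not have two delta-projections. The only thing worth verifying carefully is that no uniqueness question arises in the computation of $W_1(\delta_z,\mu_n)$, which is automatic because $\delta_z\otimes\mu_n$ is the only transference plan between a Dirac mass and any probability measure.
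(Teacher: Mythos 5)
Your proposal is correct and follows essentially the same route as the paper: the authors also take $\mu=\tfrac12\delta_x+\tfrac12\delta_y$ with $y\neq x$ at distance less than $\epsilon$ from $x$, use the unique transference plan to compute $W_1(\delta_a,\mu)=\tfrac12\dist(a,x)+\tfrac12\dist(a,y)\ge\tfrac12\dist(x,y)$ via the triangle inequality, and conclude that both $x$ and $y$ realize the minimum. The only difference is cosmetic (a sequence $y_n\to x$ versus an $\epsilon$-formulation), and your closing observation that the positivity of the minimal distance already rules out $\mu_n$ being a Dirac mass covers the one point the paper checks separately.
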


\begin{proof}
    Following \cite{cuerno}, let $\epsilon>0$. We will show that inside $B_\epsilon(x)\subset W_1(X)$ there exists at least one measure $\mu\notin\unp(X)$.  
    
    By hypothesis, there exists $y\in X$, $y\neq x$, such that $d(x,y)<\epsilon$. Then \[
    \mu:=\frac12\delta_x+\frac12\delta_y.
    \]First, notice that $\mu\neq\delta_z$ for any $z\in X$ because the support of $\mu$ is different from the support of any of the $\delta_z\in X$.
    In addition, due to the triangle inequality, \begin{equation}{\label{puntomedio}}
    W_1(\delta_a,\mu)=\frac{1}{2}\dist(a,x)+\frac12\dist(a,y)\geq\frac12\dist(x,y).
    \end{equation}

    By inequality \eqref{puntomedio} above, we can clearly see that $\mu\in B_\epsilon(x)$, because \[
    W_1(\delta_x,\mu)=\frac12\dist(x,y)<\epsilon.
    \]

    Finally, we observe that both $a=x$ and $a=y$ minimize the distance to $\mu$. Therefore, $\mu\notin \unp (X)$ and $\reach(x, X\subset W_1(X)) = 0$.
\end{proof}
Note that the hypothesis of the point being an accumulation point is necessary, because, if $x_0\in X$ is an isolated point, then the quantity $\ell = \inf_{x\in X} \dist(x, x_0)$ is strictly positive, and $B_{\ell/2}(x)$ admits a unique metric projection to $X$.

An interesting observation is that, combining the same argument in the proof of Theorem \ref{reach1wasserstein} with the previous remark, if $X$ is a discrete metric space isometrically embedded into another metric space $Y$, then  $\reach(X \subset Y) = \inf_{x_1\neq x_2} \dist(x_1, x_2)/2>0$.

Now we will provide results about the reach of a geodesic metric space inside its $p$--Wasserstein space with $p>1$. We have found that these results are closely related to the uniqueness of the geodesics. This next proposition has important consequences about the reach inside a Wasserstein space, as it constructs measures with possibly several projections in $X$.

\begin{prop}\label{proposicion21} 
Let $(X,\dist)$ be a geodesic metric space, and $x, y\in X$ two points with $x\neq y$.
Consider the probability measure $\mu= \lambda \delta_x + (1-\lambda) \delta_y$, for $0<\lambda<1$. Then $\mu$ minimizes its $p$--Wasserstein distance to $X$ exactly once for every minimizing geodesic between $x$ and $y$. 
\end{prop}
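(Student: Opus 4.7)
The plan is to reduce the minimization of $W_p(\delta_a,\mu)$ over $a\in X$ to a two-step optimization: first over the level sets $\{a : \dist(a,x)+\dist(a,y)=L\}$, and then over $L$. I interpret the statement for $p>1$, since $p=1$ is already covered by Theorem \ref{reach1wasserstein}, whose proof actually shows that every point of every minimizing geodesic from $x$ to $y$ is a minimizer (which would contradict ``exactly once'').

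First, using the formula recalled in Section \ref{wassersteintype}, one has
\[
W_p^p(\delta_a,\mu)=\lambda\,\dist(a,x)^p+(1-\lambda)\,\dist(a,y)^p,
\]
so I would study the function $F(a)=\lambda\,\dist(a,x)^p+(1-\lambda)\,\dist(a,y)^p$. Write $r=\dist(a,x)$, $s=\dist(a,y)$. The triangle inequality gives $r+s\ge\dist(x,y)$, with equality if and only if $a$ lies on some minimizing geodesic from $x$ to $y$; here I use that $X$ is geodesic (the concatenation of minimizing segments $xa$ and $ay$ realizes $\dist(x,y)$, and conversely any minimizing geodesic from $x$ to $y$ passing through $a$ realizes the equality).

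Second, for each fixed $L\ge\dist(x,y)$, I would minimize the one-variable function $g_L(r)=\lambda r^p+(1-\lambda)(L-r)^p$ on $[0,L]$. Since $p>1$, $g_L$ is strictly convex, so it admits a unique interior critical point
\[
r^\ast(L)=\frac{\alpha L}{1+\alpha},\qquad \alpha=\left(\frac{1-\lambda}{\lambda}\right)^{1/(p-1)},
\]
obtained from the stationary equation $\lambda r^{p-1}=(1-\lambda)(L-r)^{p-1}$. A short computation yields $g_L(r^\ast(L))=c_{\lambda,p}L^p$ for some positive constant $c_{\lambda,p}$ depending only on $\lambda$ and $p$. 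Since $L\mapsto c_{\lambda,p}L^p$ is strictly increasing, the outer minimum is attained precisely at $L=\dist(x,y)$.

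Combining the two reductions, the minimizers of $F$ over $X$ are exactly the points $a$ lying on some minimizing geodesic $\gamma$ from $x$ to $y$ with $\dist(a,x)=\alpha\,\dist(x,y)/(1+\alpha)$; on each such $\gamma$ (parameterized by arc length), this distance specifies a unique point, yielding the claimed one-to-one correspondence between minimizing geodesics and minimizers of $W_p(\delta_\cdot,\mu)$. The main obstacle is conceptual rather than computational: one must ensure that the optimal pair $(r^\ast,s^\ast)$ is actually realized by a point of $X$, which is precisely guaranteed by the geodesic hypothesis together with the equality case of the triangle inequality. The strict convexity used in the second step visibly fails for $p=1$, consistent with Theorem \ref{reach1wasserstein}.
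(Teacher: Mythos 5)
Your argument is correct, and it is organized differently from the paper's. The paper first restricts to a minimizing geodesic, minimizes the one–variable function $t\mapsto(\lambda t^p+(1-\lambda)(1-t)^p)\dist(x,y)^p$ there, and then handles arbitrary $a\in X$ by an explicit domination argument: it splits into the cases $\dist(a,y)\ge\dist(x,y)$ and $\dist(a,y)<\dist(x,y)$ and, in the second case, produces a competitor $a'$ on the geodesic with $\dist(a',y)=\dist(a,y)$ and $\dist(a',x)\le\dist(a,x)$. You instead observe that $W_p^p(\delta_a,\mu)$ factors through the distance profile $(r,s)=(\dist(a,x),\dist(a,y))$, which is constrained to the half–plane $r+s\ge\dist(x,y)$, and you solve the resulting planar problem by strict convexity of $g_L$ (for $p>1$) plus monotonicity of the optimal value $c_{\lambda,p}L^p$ in $L$; the geodesic hypothesis enters exactly where it should, namely to guarantee that the optimal profile with $L=\dist(x,y)$ is realized and that equality in the triangle inequality characterizes points on minimizing geodesics. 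Your route buys a cleaner equality analysis (uniqueness of the minimizer on each geodesic and exclusion of off-geodesic points come from a single convexity statement rather than two separate arguments), and your restriction to $p>1$ is consistent with how the proposition is used (the paper's own derivative argument also tacitly assumes $p>1$). The paper's comparison argument, on the other hand, is more robust: it is reused almost verbatim for the Orlicz--Wasserstein version (Proposition \ref{proposicion33}), where the cost is no longer of the convex power form your decomposition exploits. Incidentally, your value $r^\ast/L=\frac{(1-\lambda)^{1/(p-1)}}{\lambda^{1/(p-1)}+(1-\lambda)^{1/(p-1)}}$ is the correct stationary point of $\lambda t^{p-1}=(1-\lambda)(1-t)^{p-1}$; the exponent $p-1$ appearing in the paper's displayed $t_0$ agrees with this only for $p=2$, though nothing downstream depends on the exact value.
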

\begin{proof}
    The proof is structured in the following way: First, we choose a candidate for the distance--minimizer of $\mu$, supposing it lies inside a minimizing geodesic. Then, we show that the global minimum distance can only be achieved inside a minimizing geodesic.
    
    Choose $\gamma(t) \colon [0,1] \to X$ a minimizing geodesic from $x$ to $y$. We can compute the cost $W_p^p( \delta_{\gamma(t)}, \mu)$ and then minimize in $t$. Indeed, 
    \begin{equation}{\label{ecuacionwassers}}
W_p^p(\delta_{\gamma(t)}, \mu) = \lambda \dist(\gamma(t), x)^p + (1-\lambda) \dist(\gamma(t), y) ^p = (\lambda t^p + (1-\lambda) (1-t)^p  )\dist( x, y)^p.
\end{equation}
The minimum will be achieved at the parameter $t_0$ which verifies $\dfrac{d}{dt}\bigg|_{t=t_0} W_p^p (\delta_{\gamma(t)}, \mu) = 0$. We know this because that derivative is negative for $t=0$, and positive for $t=1$, and vanishes at only one point $t=t_0$. An easy computation shows us that the only solution in our interval is
\[
t_0 = \frac{(1-\lambda)^{p-1}}{\lambda^{p-1}+(1-\lambda)^{p-1}}.
\]
Thus, the Wasserstein distance between $\mu$ and this \textit{geodesic minimum} is
\[W_p^p(\delta_{\gamma(t_0)}, \mu ) = \frac{\lambda(1-\lambda)^{(p-1)p} + (1-\lambda) \lambda^{(p-1)p}}{(\lambda^{p-1}+ (1-\lambda)^{p-1})^p} \cdot \dist^p(x,y).\]
Observe that this value is independent from the minimizing geodesic $\gamma$ of our choice.

Finally, we only have to prove that the minimum can only be achieved inside a minimizing geodesic. For that purpose, we will choose any $a \in X$, and we will construct another point $a'$ inside a minimizing geodesic segment $\gamma$ verifying $W_p^p(\delta_{a}, \mu) \ge W_p^p(\delta_{a'}, \mu) $.

The case $\dist(a, y) \ge \dist(x, y)$ is straightforward,  as choosing $a' = x$ we have 
\begin{align*}
    W_p^p (\delta_a, \mu) &= \lambda \dist(a, x) ^p + (1-\lambda) \dist(a, y) ^p\\
    &\ge (1-\lambda) \dist(a, y) ^p \\
    &\ge (1-\lambda) \dist(x, y) ^p = W_p^p (\delta_x, \mu).
\end{align*}

Now, if $\dist(a, y) < \dist(x, y)$, we can pick $a'$ inside $\gamma$ at distance $\dist(a, y)$ to $y$. Observe that $\dist(a, x) \ge \dist(a', x)$ or $\gamma$ would not be minimizing. Then
\begin{align*}
W_p^p (\delta_a, \mu) &= \lambda \dist(a, x) ^p + (1-\lambda) \dist(a, y) ^p \\
&= \lambda \dist(a, x) ^p + (1-\lambda) \dist(a', y) ^p \\
&\ge  \lambda \dist(a', x) ^p + (1-\lambda) \dist(a', y) ^p = W_p^p (\delta_{a'}, \mu).
\end{align*}
Therefore, the minimum can only be achieved inside minimizing geodesics between $x$ and $y$ and our proof is complete.
\end{proof}

Now, we will apply the preceding proposition to construct measures with multiple projections close to any point in $X$. We will use this to derive sufficient conditions for attaining $\reach(p,X)=0$ for all $p\in X$.

\begin{thm}\label{teorema22}
 Let $X$ be a geodesic metric space, and $x\in X$ a point such that there exists another $y\in X$ with the property that there exist at least two different minimising geodesics from $x$ to $y$. Then, for every $p>1,$ \[
\reach(x, X\subset W_p(X))=0.
\]    
In particular, if there exists a point $x\in X$ satisfying that property, $\reach(X\subset W_p(X))=0$ for every $p>1$.
\end{thm}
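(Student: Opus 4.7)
The plan is to construct, for every $\varepsilon > 0$, a probability measure $\mu \in W_p(X)$ with $W_p(\delta_x, \mu) < \varepsilon$ that admits at least two distinct projections to the image of $X$; this directly forces $\reach(x, X \subset W_p(X)) = 0$. My test measures are the two-point measures $\mu_\lambda := \lambda\delta_x + (1-\lambda)\delta_y$ already analysed in Proposition \ref{proposicion21}.

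Let $\gamma_1, \gamma_2 \colon [0,1] \to X$ denote the two distinct minimising geodesics from $x$ to $y$ guaranteed by hypothesis. Proposition \ref{proposicion21} identifies the projections of $\mu_\lambda$ to $X$ as precisely the points $\gamma(t_0(\lambda))$ as $\gamma$ ranges over minimising geodesics from $x$ to $y$, where
$$t_0(\lambda) = \frac{(1-\lambda)^{p-1}}{\lambda^{p-1} + (1-\lambda)^{p-1}}.$$
In particular, both $\gamma_1(t_0(\lambda))$ and $\gamma_2(t_0(\lambda))$ are projections of $\mu_\lambda$, and they are distinct precisely when $t_0(\lambda)$ lies in the open set $S := \{t \in (0,1) : \gamma_1(t) \neq \gamma_2(t)\}$, which is non-empty since $\gamma_1 \neq \gamma_2$.

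The map $\lambda \mapsto t_0(\lambda)$ is a continuous strictly decreasing bijection from $(0,1)$ onto $(0,1)$ sending $\lambda \to 1^-$ to $t_0(\lambda) \to 0^+$, while $W_p(\delta_x, \mu_\lambda) = (1-\lambda)^{1/p}\dist(x,y) \to 0$ in the same limit. Whenever $S$ accumulates at $0$ I can pick a sequence $\lambda_n \to 1^-$ with $t_0(\lambda_n) \in S$; each $\mu_{\lambda_n}$ then has at least two distinct projections and $W_p(\delta_x, \mu_{\lambda_n}) \to 0$, delivering $\reach(x, X \subset W_p(X)) = 0$. This accumulation is automatic in the Riemannian applications of Corollaries \ref{corollarycompact} and \ref{corollarynotsimply}, since two distinct minimising geodesics leaving $x$ must have distinct initial velocities and therefore separate immediately.

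The main obstacle, which I expect to be the technical heart of the proof, is the branching edge case where $\gamma_1 = \gamma_2$ on a non-trivial initial interval $[0,T]$: the direct recipe then only yields $\reach(x, X \subset W_p(X)) \le (1-\lambda_T)^{1/p}\dist(x,y)$ for $\lambda_T$ defined by $t_0(\lambda_T) = T$. To upgrade this to zero reach one needs a richer family of test measures; I would first try symmetric combinations $\lambda\delta_x + \tfrac{1-\lambda}{2}\bigl(\delta_{\gamma_1(s)} + \delta_{\gamma_2(s)}\bigr)$ with $s$ just past $T$, exploiting the symmetry between the two branches in the associated cost functional to keep two distinct minimisers alive as $\lambda \to 1^-$; alternatively, one can attempt a reduction to the non-branching case at the branch point $\gamma_1(T)$ and then transfer the resulting "bad" measures back into small $W_p$-neighbourhoods of $\delta_x$ via careful convex combinations with $\delta_x$.
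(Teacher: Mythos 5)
Your main line of argument is exactly the paper's: the paper also takes $\mu_\lambda=\lambda\delta_x+(1-\lambda)\delta_y$, invokes Proposition \ref{proposicion21}, and lets $\lambda\to 1^-$ while computing $W_p^p(\mu_\lambda,\delta_x)=(1-\lambda)\dist(x,y)^p\to 0$. Where you diverge is that the paper simply asserts that $\mu_\lambda$ ``will have at least two different points minimizing its distance to $X$,'' whereas you correctly observe that Proposition \ref{proposicion21} only produces the single point $\gamma(t_0(\lambda))$ on each minimising geodesic, and two distinct geodesics may coincide at the parameter $t_0(\lambda)$. Since $t_0(\lambda)\to 0$ as $\lambda\to1^-$, the argument genuinely needs the set $S=\{t:\gamma_1(t)\neq\gamma_2(t)\}$ to accumulate at $0$, exactly as you say. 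In the case you do handle, your proof is complete and is the paper's proof done carefully, and your remark that the separation is automatic for Riemannian manifolds (distinct minimising geodesics between the same endpoints cannot share an initial segment) is precisely what makes Corollaries \ref{corollarycompact} and \ref{corollarynotsimply} sound.

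The branching edge case, however, is not something you can engineer around with richer test measures: the statement as literally written fails there. Let $X$ be a segment from $x$ to $z$ of length $1$ with a circle of circumference $2$ attached at $z$, and let $y$ be the point of the circle at arc-distance $1$ from $z$. There are two distinct minimising geodesics from $x$ to $y$ (the trunk followed by either arc), so the hypothesis holds, yet $\reach(x,X\subset W_p(X))\ge 1/4$ for every $p>1$. Indeed, if $W_p(\mu,\delta_x)<r$ and $a$ minimises $F(a)=\int_X\dist(a,w)^p\,d\mu(w)$, then $F(a)\le F(x)<r^p$, while restricting the integral to $B_{\rho/2}(x)$ with $\rho=\dist(a,x)$ and using Markov's inequality gives $F(a)>(\rho/2)^p-r^p$; hence $\rho<2^{1+1/p}r\le 4r<1$, so every minimiser lies in the interior of the trunk. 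On the trunk, parametrised by $u=\dist(\cdot,x)\in[0,1]$, each $w\in X$ contributes either $|u-c_w|^p$ (for $w$ on the trunk) or $(c_w-u)^p$ with $c_w\ge 1$ (for $w$ on the circle), both strictly convex in $u$ for $p>1$, so $F$ is strictly convex there and the minimiser is unique; thus $B_{1/4}(\delta_x)\subset\unp(X)$. So the hypothesis actually needed --- and the one your argument uses --- is that the two geodesics differ at parameters arbitrarily close to $0$, i.e.\ branch at $x$ itself; neither your symmetric test measures $\lambda\delta_x+\tfrac{1-\lambda}{2}(\delta_{\gamma_1(s)}+\delta_{\gamma_2(s)})$ (whose barycenter in the example above is again a unique trunk point, by the same convexity) nor a reduction to the branch point can remove it. You have in fact located a genuine gap in the paper's own proof, not merely in your own.
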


\begin{proof}
The probability measure $\mu_\lambda = \lambda \delta_x + (1-\lambda )\delta_y$ will have at least two different points minimizing its distance to $X$ by proposition \ref{proposicion21}. 

    Now simply observe that $W_p^p(\mu_\lambda, \delta_x) = (1-\lambda) \dist(x, y)^p$, which decreases to 0 when $\lambda \to 1 $. Hence $\reach(x, X) = 0$ for every $x\in X$ satisfying that property, and therefore $\reach(X\subset W_p(X))=0$.    
\end{proof}
When $X$ is a Riemannian manifold, some common hypothesis will grant us reach $0$.
For example, a classic result by Berger (see for example \cite[Chapter 13, Lemma 4.1]{docarmo}) proves that our theorem can be applied when $X$ is compact. In this case, for any $p\in X$, there always exists another $q\in X$ such that there exist two minimizing geodesics starting at $p$ to $q$. More precisely, for every $p\in X$ we can choose a maximum $q$ of the function $\dist(p, \cdot)$ and there will be at least two minimal geodesics from $p$ to $q$. There is a similar result in \cite{luisyfernando}, where it is shown that for every $p$, there exists $q\in X$ such that $p$ and $q$ are joined by several minimizing geodesics.

\begin{cor}{\label{corollarycompact}} 
If $M$ is a compact Riemannian manifold, then $\reach(x, M\subset W_p(M)) =0$ for every $p>1$ and $x\in M$.
\end{cor}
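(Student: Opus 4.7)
The plan is to reduce the corollary directly to Theorem \ref{teorema22}. It suffices to show that for any $x \in M$, there is some $y \in M$ joined to $x$ by at least two distinct minimizing geodesics; then Theorem \ref{teorema22} applies verbatim.

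Given $x \in M$, I consider the continuous function $f \colon M \to \mathbb{R}$ defined by $f(y) = \dist(x, y)$. Since $M$ is compact, $f$ attains its maximum at some point $q \in M$. The key step is to argue, in the spirit of Berger's lemma (\cite[Chapter 13, Lemma 4.1]{docarmo}), that at such a maximum there must be at least two distinct minimizing geodesics from $x$ to $q$. The standard first-variation argument runs as follows: suppose for contradiction that there were a unique unit-speed minimizing geodesic $\gamma$ from $x$ to $q$, with terminal velocity $v = \dot\gamma(\dist(x,q)) \in T_q M$. Then for any unit vector $w \in T_q M$ not parallel to $v$, moving from $q$ along the geodesic with initial velocity $-w$ would, by the first variation formula, change $f$ at first order by $\langle v, w\rangle$. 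Choosing $w$ with $\langle v, w\rangle < 0$ would produce a nearby point $q'$ with $f(q') > f(q)$, contradicting maximality of $q$. Hence the set of terminal velocities of minimizing geodesics from $x$ to $q$ cannot consist of a single unit vector, giving us at least two distinct minimizing geodesics.

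With such $y = q$ in hand, Theorem \ref{teorema22} immediately yields $\reach(x, M \subset W_p(M)) = 0$ for every $p > 1$, which is the desired conclusion. Since $x \in M$ was arbitrary, this also gives $\reach(M \subset W_p(M)) = 0$.

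The main obstacle is essentially only the Berger-type lemma establishing the existence of $q$; this is a classical fact for compact Riemannian manifolds, and the authors even mention it explicitly, so the corollary really is an immediate application of Theorem \ref{teorema22}. I expect the written proof to be a one- or two-sentence citation of the classical result followed by invocation of the theorem.
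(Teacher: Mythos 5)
Your proposal matches the paper's justification exactly: the authors also deduce the corollary from Theorem \ref{teorema22} by citing Berger's lemma (\cite[Chapter 13, Lemma 4.1]{docarmo}) to get, for each $x$, a farthest point $q$ joined to $x$ by at least two minimizing geodesics. The only nitpick is a sign slip in your first-variation sketch (moving with initial velocity $-w$ changes $f$ at first order by $-\langle v,w\rangle$, so you want $\langle v,w\rangle<0$ to make $f$ increase along $-w$... i.e.\ the inequality should be arranged the other way), but since this is a cited classical fact it does not affect correctness.
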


Also, we can apply our Theorem \ref{teorema22} to the non simply connected case:

\begin{cor}{\label{corollarynotsimply}}
    If $M$ is a complete Riemannian manifolds with non--trivial fundamental group (i.e. not simply connected), then $\reach(x, M\subset W_p(M)) =0$ for every $p>1$ and $x\in M$.
\end{cor}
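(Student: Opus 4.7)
The plan is to reduce the statement to Theorem \ref{teorema22}: it suffices to exhibit, for an arbitrary $x\in M$, a point $y\in M$ joined to $x$ by at least two distinct minimising geodesics. The geometric source of such a pair will be a shortest non-contractible geodesic loop at $x$, together with a midpoint trick.

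To implement this I would pass to the Riemannian universal cover $\pi\colon \widetilde M\to M$, which is complete, and on which the deck group $\Gamma=\pi_1(M)\neq\{e\}$ acts freely, properly discontinuously, and by isometries. Fixing a lift $\widetilde x$ of $x$, the orbit $\Gamma\widetilde x$ is closed and discrete, so $L:=\inf_{g\in\Gamma\setminus\{e\}}\dist(\widetilde x,g\widetilde x)>0$ is attained by some $g_0\neq e$. I would then choose a minimising geodesic $\widetilde\gamma\colon[0,L]\to\widetilde M$ from $\widetilde x$ to $g_0\widetilde x$, set $\widetilde y=\widetilde\gamma(L/2)$, and put $y=\pi(\widetilde y)$. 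My two candidate geodesics from $x$ to $y$ in $M$ would be $\gamma_1=\pi\circ\widetilde\gamma|_{[0,L/2]}$ and the projection of $\widetilde\gamma|_{[L/2,L]}$ traversed backwards.

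The main (and essentially only) obstacle is verifying that $\gamma_1$ is minimising in $M$; the other half then follows by the same argument applied to the reversed geodesic. The idea is the following: given any curve $\alpha$ from $x$ to $y$ in $M$ of length $\ell$, lift it starting at $\widetilde x$ to a curve ending at some lift $h\widetilde y$ of $y$, with $h\in\Gamma$. If $h=e$, then $\ell\geq\dist(\widetilde x,\widetilde y)=L/2$ because $\widetilde\gamma$ is minimising. If $h\neq e$, the defining minimality of $L$ gives $\dist(\widetilde x,h\widetilde x)\geq L$, and combining the triangle inequality with the isometric invariance $\dist(h\widetilde x,h\widetilde y)=\dist(\widetilde x,\widetilde y)=L/2$ yields $\dist(\widetilde x,h\widetilde y)\geq L-L/2=L/2$, so again $\ell\geq L/2$. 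Hence every curve from $x$ to $y$ has length at least $L/2$, which equals the length of $\gamma_1$, so $\gamma_1$ is minimising.

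To finish, I would observe that the two resulting minimising geodesics are distinct: their lifts at $\widetilde x$ end at $\widetilde y$ and at $g_0^{-1}\widetilde y$ respectively, and these two lifts of $y$ differ because $\Gamma$ acts freely and $g_0\neq e$; unique path lifting then forces the projections to be distinct curves in $M$. Theorem \ref{teorema22} now yields $\reach(x,M\subset W_p(M))=0$ for every $p>1$ and every $x\in M$, as desired.
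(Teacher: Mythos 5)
Your proposal is correct and follows essentially the same route as the paper: pass to the universal cover, pick the closest nontrivial orbit point of a lift of $x$, and show that the two halves of the resulting shortest geodesic loop give two distinct minimising geodesics from $x$ to the midpoint $y$, so that Theorem \ref{teorema22} applies. Your verification of minimality (lifting a competitor curve and splitting into the cases $h=e$ and $h\neq e$) and of distinctness (via unique path lifting and freeness of the deck action) just makes explicit what the paper states more tersely.
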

\begin{proof}
    Consider the universal cover $\pi \colon \tilde{M} \to M$. Let $x\in M$, and let $\tilde{x}$ be a point with $\pi(\tilde{x}) = x$. Denote by $G$ the fundamental group of $M$. We know that $G$ acts on $\tilde{M}$ by isometries and that $G\tilde{x}$ is a discrete, locally finite set. Then, we may take $\tilde{x}'\in G\tilde{x}$ at minimal distance from $\tilde{x}$. 

    Then we can take a minimizing geodesic $\tilde{\gamma}: [0, \ell] \to \tilde{M}$ from $\tilde{x}$ to $\tilde{x}'$, and the projection $\gamma= \pi \circ \tilde{\gamma}$ will be a geodesic loop such that $\gamma(0)= \gamma(\ell) = x$, and $\gamma$ is globally minimizing on $[0, \ell/2]$ and $[\ell/2, \ell]$. Otherwise, by taking a shorter curve to the midpoint $\gamma(\ell/2)$ and lifting it we could construct a shorter geodesic from $\tilde{x}$ to another point in $G\tilde{x}$ and our two points would not be at minimal distance.
\end{proof}

\subsection{Infinite reach}


For this subsection, we will use results obtained by Kell \cite{kell}, employing the metric definitions presented in Section \ref{metricdefinitions}. The combination of these elements yields results that imply infinite reach for certain metric spaces.

\begin{thm}{\label{reachpositivowass}}
Let $(X,\dist)$ be a reflexive metric space. Then the following assertions hold:
    \begin{enumerate}
        \item If $X$ is strictly $p$--convex for $p\in[1,\infty)$ or uniformly $\infty$--convex if $p=\infty$, then\begin{equation}
            \reach(X\subset W_r(X))=\infty\text{, for }r>1.
        \end{equation} 
        \item If $X$ is Busemann, strictly $p$--convex for some $p\in[1,\infty]$ and uniformly $q$--convex for some $q\in[1,\infty]$, then\begin{equation}
            \reach(X\subset W_r(X))=\infty\text{, for }r>1.
        \end{equation}
    \end{enumerate}
\end{thm}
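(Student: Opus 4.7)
The plan is to reduce the problem to existence and uniqueness of the $r$-barycenter of an arbitrary measure $\mu \in W_r(X)$. Since
\[
W_r(\mu,\delta_x)^r = \int_X \dist(x,y)^r\, d\mu(y) =: F_\mu(x),
\]
the metric projections of $\mu$ onto the isometrically embedded copy of $X$ inside $W_r(X)$ are exactly the minimizers of the energy functional $F_\mu$. If every $\mu \in W_r(X)$ admits a unique such minimizer, then $\unp(X) = W_r(X)$ and therefore $\reach(X\subset W_r(X)) = \infty$ by definition.

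For existence, I would take a minimizing sequence $\{x_n\} \subset X$ with $F_\mu(x_n) \to \inf F_\mu$ and consider the sublevel sets $C_n := \{x \in X : F_\mu(x) \leq F_\mu(x_n)\}$. These sets are closed by continuity of $F_\mu$, bounded because $F_\mu(x) \to \infty$ as $\dist(x,y_0) \to \infty$ for any fixed $y_0$ in the support of $\mu$, and convex under any of the convexity hypotheses of the two cases. Reflexivity of $X$ then supplies a common point in the nested intersection $\bigcap_n C_n \neq \emptyset$, and any such point is a minimizer of $F_\mu$. This mirrors Kell's existence argument for barycenters in Section~2 of \cite{kell}.

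For uniqueness, suppose $x_0 \neq x_1$ are both minimizers of $F_\mu$ and let $m$ be a midpoint. The goal is to produce the pointwise strict inequality
\[
\dist(m,y)^r < \tfrac12\dist(x_0,y)^r + \tfrac12\dist(x_1,y)^r
\]
for $\mu$-almost every $y$, since integration against $\mu$ would then yield $F_\mu(m) < F_\mu(x_0)$, a contradiction. In case (1), strict $r$-convexity of $X$ (for $r<\infty$) or uniform $\infty$-convexity (for $r=\infty$) delivers this bound directly, applied with $x=x_0$, $y=x_1$, $z=y$. In case (2), the Busemann condition combined with strict $p$-convexity and uniform $q$-convexity propagates strict and uniform convexity to every exponent of $[1,\infty]$ simultaneously, by the remark following Definition \ref{kelldefinitions1}, and hence in particular to the exponent $r$. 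Existence and uniqueness together give $\unp(X) = W_r(X)$, which is the desired conclusion.

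The main obstacle will be the bookkeeping needed to match Kell's existence and uniqueness results precisely to our energy exponent $r$ and to the convexity exponents $p$ and $q$ appearing in the hypotheses, since \cite{kell} formulates its statements in a slightly more abstract barycenter framework that relies on several monotonicity lemmas relating the different exponents (together with the interaction between the Busemann condition and strict convexity). Once this identification is set up, each of the two cases reduces to a direct invocation of the corresponding theorem from \cite{kell}, and the reach conclusion follows.
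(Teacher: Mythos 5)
Your proposal is correct and follows essentially the same route as the paper: reduce infinite reach to existence and uniqueness of $r$-barycenters, obtain existence from reflexivity plus convexity of sublevel sets and uniqueness from a strict-convexity midpoint argument (both of which are Kell's Theorem~4.4 and Corollary~4.5, which the paper cites directly), and handle case (2) by propagating strict/uniform convexity to all exponents via the Busemann condition exactly as in the paper's appeal to Kell's Lemma~1.4. The only difference is that you sketch the internals of Kell's arguments where the paper simply invokes them.
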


\begin{proof}

In \cite[Theorem 4.4.]{kell}, Kell establishes that any $p$-convex and reflexive metric space possesses $p$-barycenters, as he defined them in  \cite[Definition 4.3.]{kell}. His Theorem 4.4. establishes the existence of such barycenters but not uniqueness. To establish it, we require the conditions we stated in both cases of our theorem. Now we present how these restrictions give us infinity reach.

    \begin{enumerate}
        \item Following \cite[Corollary 4.5.]{kell}, the spaces $(X,\dist)$ which satisfy the hypotheses in item (1) of the theorem have unique $r$--barycenters for $r>1$. In other words, every $\mu\in W_r(X)$ has a unique barycenter. This finishes the proof of the first assertion of the theorem.
        \item Following \cite[Lemma 1.4.]{kell}, if $(X,\dist)$ is strictly (resp. uniformly) $p$--convex for some $p$, then it is strictly (resp. uniformly) $p$--convex for all $p$. Hence, we are in the case (1). \qedhere
    \end{enumerate}
\end{proof}

As we pointed in Section \ref{metricdefinitions}, $\mathrm{CAT}(0)$--spaces are a well--known example of metric spaces satisfying some of the hypotheses in Theorem \ref{reachpositivowass}. In that sense, there is a straightforward corollary to our Theorem \ref{reachpositivowass} in terms of $\mathrm{CAT}(0)$--spaces:

\begin{cor}{\label{reachinfinitocat0}}
    Let $(X,\dist)$ be a reflexive $CAT(0)$--space, then  \[
    \reach(X\subset W_p(X))=\infty, \text{ for }p>1.
    \]
\end{cor}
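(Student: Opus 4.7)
The plan is to observe that a reflexive $\mathrm{CAT}(0)$--space fits directly into case (2) of Theorem \ref{reachpositivowass}, so the corollary will follow immediately once we check the three convexity hypotheses. No further construction or estimate is required.

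First I would recall, as noted in the preliminaries, that any $\mathrm{CAT}(0)$--space is Busemann. This handles the first hypothesis of Theorem \ref{reachpositivowass}(2). Next, I would invoke the standard fact, also recorded in the remark after Definition \ref{kelldefinitions1}, that every $\mathrm{CAT}(0)$--space is uniformly $2$--convex (this is essentially the CAT(0) inequality itself, applied with $m$ equal to the midpoint of $x$ and $y$). This covers both a strict $p$--convexity condition and a uniform $q$--convexity condition, taking $p=q=2$, so both remaining hypotheses of case (2) hold.

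With reflexivity assumed by hypothesis, all the conditions of Theorem \ref{reachpositivowass}(2) are satisfied, and we conclude
\[
\reach(X \subset W_r(X)) = \infty \quad \text{for every } r>1.
\]

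There is no real obstacle in this argument; the content of the corollary lies entirely in Theorem \ref{reachpositivowass} and in the classical fact that $\mathrm{CAT}(0)$ implies Busemann and uniform $2$--convexity. The only point worth flagging explicitly, for the reader, is that strict $2$--convexity in the sense of Definition \ref{kelldefinitions1} is automatic from uniform $2$--convexity, so one does not need any additional verification beyond the CAT(0) inequality itself.
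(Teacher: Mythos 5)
Your proof is correct and follows essentially the same route as the paper: both reduce the corollary to case (2) of Theorem \ref{reachpositivowass} by citing that $\mathrm{CAT}(0)$ implies Busemann and uniformly $2$--convex. The only (harmless) difference is that you obtain strict $2$--convexity as a consequence of uniform $2$--convexity (which does follow directly from the definitions, since for $x\neq y$ one can always choose an admissible $\epsilon>0$), whereas the paper verifies strict $1$--convexity directly from the $\mathrm{CAT}(0)$ comparison inequality and then upgrades to all $p$ via \cite[Lemma 1.4]{kell}.
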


\begin{proof}
    As Kell stated in \cite[Last line of Introduction]{kell}, $\mathrm{CAT}(0)$-spaces are both Busemann spaces and uniformly $p$--convex for every $p\in[1,\infty]$. 
    Moreover, from the definition of $\mathrm{CAT}(k)$-spaces, with $k=0$, we have that
    \begin{align*}
    \dist(m(x,y),z)&\leq\dist_{\mathbb{E}^n}(m(x',y'),z')\\
        &\frac12\left(\dist_{\mathbb{E}^n}(x',z')+\dist_{\mathbb{E}^n}(y',z')\right)=\frac12\left(\dist(x,z)+\dist(y,z)\right).
    \end{align*}
Hence, $\mathrm{CAT}(0)$--spaces are strictly $1$--convex and, by \cite[Lemma 1.4]{kell} they are strictly $p$--convex for all $p$. The conclusion now follows from item (2) in Theorem \ref{reachpositivowass}.
\end{proof}

\begin{rema}
It is easy to check, from the definition, that $\mathrm{CAT}(0)$--spaces are contractible, and, therefore, simply connected. This is a necessary condition for Theorem \ref{reachpositivowass}, because if this were not the case, we would have a closed geodesic and Proposition \ref{proposicion21} would give us zero reach for the points inside that geodesic.
\end{rema}

As particular cases of $\mathrm{CAT}(0)$--spaces, we have Hadamard manifolds (complete, simply connected Riemannian manifolds with non-positive sectional curvature everywhere) and, in particular, Euclidean $n$--space. So, as a corollary, we obtain the following:

\begin{cor}\mbox{}%

\begin{enumerate}
    \item Let $(M^n,g)$ be a Hadamard manifold. Then \[
    \reach(M^n\subset W_p(M^n))=\infty, \textit{ for }p>1.
    \]
    \item Let $\mathbb{E}^n$ be the Euclidean $n$--space. Then \[
    \reach(\mathbb{E}^n\subset W_p(\mathbb{E}^n))=\infty, \textit{ for }p>1.
    \]
\end{enumerate}
\end{cor}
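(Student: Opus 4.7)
The plan is to reduce both statements to Corollary \ref{reachinfinitocat0}, by showing that Hadamard manifolds (and hence Euclidean $n$-space as a special case) are reflexive $\mathrm{CAT}(0)$-spaces. Once that is verified, the corollary is immediate, and part (2) follows from part (1) since $\mathbb{E}^n$ is a Hadamard manifold with zero sectional curvature.

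\textbf{Step 1: the $\mathrm{CAT}(0)$ condition.} I would invoke the Cartan--Hadamard theorem: any complete, simply connected Riemannian manifold with nonpositive sectional curvature is a $\mathrm{CAT}(0)$-space. This is standard (see e.g.\ Bridson--Haefliger) and has nothing to do with Wasserstein spaces, so I would just state it and cite it. In particular, since $\mathbb{E}^n$ has zero sectional curvature, is complete and simply connected, it is a Hadamard manifold, and so the Euclidean case reduces to the Hadamard case.

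\textbf{Step 2: reflexivity.} I would use Hopf--Rinow: a complete, finite-dimensional Riemannian manifold is a proper metric space, i.e.\ closed bounded subsets are compact. Let $\{C_i\}_{i\in I}$ be a non-increasing family of non-empty bounded closed convex subsets of a Hadamard manifold $M^n$. Fix any $i_0\in I$; then $C_{i_0}$ is closed and bounded, hence compact. Every $C_i$ with $i\geq i_0$ lies in $C_{i_0}$ and is closed, so $\{C_i\}_{i\geq i_0}$ is a family of closed subsets of the compact set $C_{i_0}$ with the finite intersection property (since the family is non-increasing, any finite subfamily has non-empty intersection, given by its minimum). By compactness, $\bigcap_{i\geq i_0} C_i \neq \emptyset$, and this coincides with $\bigcap_{i\in I} C_i$. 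Hence $M^n$ is reflexive in the sense of Definition \ref{kelldefinitions2}.

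\textbf{Step 3: conclude.} Having verified that $M^n$ is a reflexive $\mathrm{CAT}(0)$-space, Corollary \ref{reachinfinitocat0} gives $\reach(M^n\subset W_p(M^n))=\infty$ for every $p>1$, proving (1). Part (2) is the special case $M^n=\mathbb{E}^n$. I do not expect any real obstacle here: the proof is essentially a bookkeeping of known facts (Cartan--Hadamard, Hopf--Rinow) combined with Corollary \ref{reachinfinitocat0}. The only mildly delicate point is being careful that reflexivity really does follow from properness in the way stated, but the finite intersection argument makes this transparent.
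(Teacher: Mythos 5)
Your proposal is correct and follows essentially the same route as the paper, which presents this corollary as an immediate consequence of Corollary \ref{reachinfinitocat0} after noting that Hadamard manifolds are particular cases of $\mathrm{CAT}(0)$-spaces (the paper leaves the reflexivity check implicit, remarking only later that proper metric spaces are reflexive). Your explicit verification of reflexivity via Hopf--Rinow and the finite intersection property is a sound and welcome filling-in of that gap, but it does not constitute a different approach.
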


Other authors have considered the existence of  barycenters in the $\mathrm{CAT}(\kappa)$--space context, specifically $\kappa=0$. In \cite[Proposition 4.3.]{sturmnonpositive}, Sturm proved the existence and uniqueness of barycenters for $\mathrm{CAT}(0)$--spaces only for the $2$--Wasserstein space. In \cite[Theorem B]{yokota}, Yokota stated a condition on $\mathrm{CAT}(\kappa)$--spaces, with $\kappa>0$, to have unique barycenters. This condition is related to the size of the diameter of the $\mathrm{CAT}(\kappa)$--space, which needs to be small in order to have unique barycenters.

\subsection{Projection map}

The infinity of the reach leads to a natural question about the regularity of the \textit{projection map}, i.e., \begin{align*}
    \proj_p: W_p(X)&\to X\\
    \mu&\mapsto r_\mu,
\end{align*}where $r_\mu\in X$ denotes the barycenter of the measure $\mu$, that is, the only point in $X$ that minimizes the distance to $\mu$.

Let $(X,\dist)$ be a metric space for which Theorem \ref{reachpositivowass} holds. Then $X$ has infinity reach; in other words, every measure has a unique barycenter and $\proj_p$ is well--defined. Moreover, the fibres of the map are convex. Let $\mu$, $\nu\in\{\sigma\in W_p(X)\colon r_\sigma=a\}$, $\lambda\in(0,1)$ and $b\in X$. Then \[
W_p^p(\lambda\mu+(1-\lambda)\nu,\delta_b)=\lambda W_p^p(\mu,\delta_b)+(1-\lambda)W_p^p(\nu,\delta_b)\geq\lambda W_p^p(\mu,\delta_a)+(1-\lambda)W_p^p(\nu,\delta_a),
\]since $\mu$, $\nu\in\{\sigma\in W_p(X)\colon r_\sigma=a\}$.

A \textit{submetry} between two metric spaces $X, Y,$ is a map $f\colon Y \to X$ such that, for every $a\in Y$ and every $r\ge 0$, we have $f(B_Y(a, r)) = B_X(f(a), r)$. For more information about this type of maps, we refer the reader   \cite{beressub,guijarrobere,guijarrosub,kaplytchaksub}.

We briefly recall Kuwae's property \textbf{B}, (see Section $4.3$ in \cite{kell}  and references therein). Take  two geodesics $\gamma, \eta$ such that they intersect at an unique point $p_0.$ Assume that for all points  $z \in \gamma [0,1]$
the minimum of the map $t \mapsto \|z-\eta_t\|$ is achieved only by the point $p_0.$ Then for every point $w \in \eta[0,1]$ the minimum of the map $t \mapsto \|w-\gamma_t\|$ is achieved only by $p_0.$

\begin{thm}{\label{thmsubmetry}}
 Let $(X,\|\cdot\|)$ be a reflexive Banach space equipped with a strictly convex norm and satisfying property \textbf{B}. Then  $proj_2$ is a submetry.
\end{thm}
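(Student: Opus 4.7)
The plan is to verify the two inclusions that together define a submetry: $\proj_2(B_{W_2(X)}(\mu, r)) \subseteq B_X(\proj_2(\mu), r)$ (the $1$-Lipschitz direction) and $B_X(\proj_2(\mu), r) \subseteq \proj_2(B_{W_2(X)}(\mu, r))$ (the surjectivity direction), for every $\mu \in W_2(X)$ and $r > 0$. First I would record that $\proj_2$ is well-defined on all of $W_2(X)$: a reflexive Banach space is reflexive in Kell's metric sense (nested bounded closed convex sets are weakly compact, hence have non-empty intersection by the finite intersection property), and strict convexity of the norm combined with the power-mean inequality forces strict $2$-convexity of the induced metric in the sense of Definition \ref{kelldefinitions1}. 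Hence Theorem \ref{reachpositivowass} applies and each $\mu \in W_2(X)$ has a unique barycenter $r_\mu$, so $\reach(X \subset W_2(X)) = \infty$ and $\proj_2$ is globally defined.

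For the surjectivity inclusion I would exploit that translations are isometries of any Banach space. Given $a' \in X$ with $\|a' - r_\mu\| \leq r$, set $T(x) := x + (a' - r_\mu)$ and $\mu' := T_\# \mu$. The change of variables
\[
\int_X \|x - c\|^2 \, d\mu'(x) = \int_X \bigl\|y - (c - (a' - r_\mu))\bigr\|^2 \, d\mu(y)
\]
shows the right-hand side is minimised precisely at $c = a'$, so $r_{\mu'} = a'$. The transport plan $(\identity, T)_\# \mu \in \Gamma(\mu, \mu')$ then gives $W_2(\mu, \mu') \leq \|a' - r_\mu\| \leq r$, whence $a' \in \proj_2(B_{W_2(X)}(\mu, r))$.

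The hard step, and the main obstacle, is the $1$-Lipschitz estimate $\|r_\mu - r_\nu\| \leq W_2(\mu, \nu)$. In a Hilbert space it is immediate because $r_\mu$ coincides with the Bochner mean and $r_\mu - r_\nu = \int (x - y) \, d\pi(x,y)$ for any coupling $\pi \in \Gamma(\mu, \nu)$, after which Cauchy--Schwarz closes the argument. In a general strictly convex reflexive Banach space the barycenter is only variational, so the plan is to fix an optimal plan $\pi \in \Gamma(\mu,\nu)$, form the displacement interpolation $\mu_t := ((1-t)\pi_1 + t\pi_2)_\# \pi$ (a minimising $W_2$-geodesic between $\mu_0 = \mu$ and $\mu_1 = \nu$), and analyse the curve $c_t := r_{\mu_t}$ in $X$. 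Strict convexity of $\|\cdot\|^2$ identifies $c_t$ uniquely via the Euler--Lagrange identity $\int J(x - c_t) \, d\mu_t(x) = 0$ in $X^*$, where $J$ is the duality map. Property $\mathbf{B}$ then enters to prevent the barycentric curve $(c_t)$ from branching off the affine segment from $r_\mu$ to $r_\nu$, forcing it to be a constant-speed geodesic whose total length is controlled by the $W_2$-length of $(\mu_t)$, namely $W_2(\mu, \nu)$. Verifying this geometric--variational comparison rigorously is where all the analytic subtlety lives; once it is in hand, it combines with the translation argument above to yield the equality $\proj_2(B_{W_2(X)}(\mu, r)) = B_X(r_\mu, r)$.
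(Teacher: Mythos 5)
Your argument splits into the same three pieces as the paper's proof, and two of them match it exactly: the well-definedness of $\proj_2$ via reflexivity plus strict $2$-convexity and Theorem \ref{reachpositivowass}, and the surjectivity inclusion via the push-forward under the translation $T$ together with the plan $(\identity,T)_\#\mu$. The genuine gap is the $1$-Lipschitz estimate $\|r_\mu-r_\nu\|\le W_2(\mu,\nu)$, which you correctly single out as the hard step but then only outline rather than prove. As written, the outline does not close: (i) the Euler--Lagrange identity $\int J(x-c_t)\,d\mu_t(x)=0$ presupposes that $\|\cdot\|^2$ is Gâteaux differentiable with a single-valued duality map $J$, which strict convexity of the norm does not provide; (ii) even if you knew that the barycentric curve $c_t$ stays on the affine segment from $r_\mu$ to $r_\nu$, bounding its length by the $W_2$-length of the displacement interpolation is precisely a local form of the Lipschitz inequality you are trying to establish, so the comparison is circular as stated; and (iii) the appeal to property $\mathbf{B}$ ("prevent branching") is heuristic and is not the way that hypothesis actually enters.

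The paper avoids all of this by reducing to Kell's machinery: strict convexity of the norm gives strict $p$-convexity (a power-mean computation), and the fact that midpoints are affine midpoints gives $\|m(x,z)-m(y,z)\|=\tfrac12\|x-y\|$, i.e.\ the Busemann condition. Busemann convexity together with property $\mathbf{B}$ yields the $2$-Jensen inequality of \cite[Section 4.3]{kell}, and \cite[Proposition 4.8]{kell} then states directly that the barycenter map is $1$-Lipschitz. That citation (or an honest proof of the Jensen inequality in this setting) is the missing ingredient in your write-up; no displacement-interpolation analysis is needed.
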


\begin{proof}
 First let us make a couple observations. From the strict convexity of the norm it follows that between any two points $x,y \in X$ there is a unique geodesic joining them, more precisely it is the curve $[0,1]\ni t \mapsto (1-t)x+ty.$

In particular this tells us that 
$m(x,y)= \frac{1}{2}x+\frac{1}{2}y.$  

Let $p>1$ and $x,y,z\in X$ then
 \begin{align*}
 \|m(x,y)-z\|^p &= \|\frac{1}{2}x+\frac{1}{2}y-z\|^p \\
 &< 2^{p-1}\left(\|\frac{1}{2}(x-z)\|^p+\|\frac{1}{2}(y-z)\|^p\right)\\
 &= 2^{p-1}\left(\frac{1}{2^p}\|x-z\|^p+\frac{1}{2^p}\|y-z\|^p\right)\\
 &= \frac{1}{2}\|x-z\|^p+\frac{1}{2}\|y-z\|^p
 \end{align*}
 Hence $(X,\|\cdot\|)$ is strictly $p-$convex and so it satisfies the conditions of Theorem \ref{reachpositivowass}, with this barycenters exist and are unique. Therefore the projection map $\proj_2$ is well defined.

 Now notice that 

 \begin{align*}
 \|m(x,y)-m(y,z)\| &= \|\frac{1}{2}(x+z)-\frac{1}{2}(y+z)\|\\
 &= \frac{1}{2}\|x-y\|.
 \end{align*}
Which implies that for $p>1$
\[\|m(x,y)-m(y,z)\|^p < \frac{1}{2}\|x-y\|^p, \]
i.e. it is $p-$Busemann. Then the  
$2-$Jensen inequality  (see Section $4.3$   in \cite{kell}) holds and so in addition  we have that by Proposition $4.8$ in \cite{kell}  $\proj_2$ is $1-$Lipschitz. Let $B_r(\mu)$ be a ball in the Wasserstein space. We just proved  that 
\[\proj_2(B_r(\mu) )\subset B_r(\proj_2(\mu)).\]
Then, it suffices to see that every point in $B_r(\proj_2(\mu))$ is the image of a point (the barycenter of a measure) in $B_r(\mu)$. Fix $\mu, r\ge 0$ and let $b\in B_r(\proj_2(\mu))$. Let $T$ be the translation from $\proj_2(\mu)$ to $b$. Let us show that $T_\# \mu$ has $b$ as a barycenter. For any $a\in X$,
    \begin{align*}
    W_2^2(T_\# \mu, \delta_{T(a)}) &= \int_{X} \|x-T(a)\|^2 \, d(T_\# \mu )(x)\\ 
    &= \int_{X} \|T(x)-T(a)\|^2 \, d\mu (x)
    \\ &= \int_{X} \|x-a\|^2 \,d\mu(x) = W_2^2(\mu, \delta_a).
    \end{align*}
    Hence, if $a=\proj_2(\mu)$, then $a$ minimizes the distance from $X$ to $\mu$, and then $T(a)=b$ minimizes the distance to $T_\#\mu$. 

    It remains to see that $T_\#\mu$ is contained in $B_r(\mu)$. Choosing $(\operatorname{Id}, T)_\# \mu$ as a transport plan in $\Pi(\mu, T_\#\mu)$, 
    \begin{align*}
    W_2^2(\mu, T_\#\mu) &= \inf_{\pi \in \Pi(\mu, T_\#\mu)} \int_{X \times X} \|x-y\|^2 \, d\pi(x,y) \\ &\le \int_{X} \|x-T(x)\|^2 \, d\mu(x) = \| \proj_2(\mu)-b\|^2 < r^2.
    \end{align*}
    Therefore, $T_{\#}\mu\in B_r(\mu).$
\end{proof}

Examples of spaces satisfying the assumptions of Theorem  include Hilbert spaces and $L^p$ spaces (see Examples $4.5,$ and $4.6$ in \cite{kuwae}).

\section{Reach of the Orlicz--Wasserstein space}{\label{reachwassersteinorliz}}
An introduction to Orlicz--Wasserstein spaces can be found in subsection \ref{subseccionorlicz}. More information about this type of spaces along with a proof of their completeness can be found in \cite{sturm}.
\subsection{Null reach}

We start this section with a simple remark. 
\begin{rema}
Let $\varphi\equiv Id$. Observe that $\psi \circ \dist$ is a distance when $\psi$ is a positive concave function with $\psi(0)=0$. Then $W_\vartheta$ is a 1-Wasserstein distance for the metric space $(X, \psi \circ \dist)$.  Therefore, \[
        \reach(x, X\subset W_{\vartheta}(X))=0
        \]
        whenever $x\in X$ is an accumulation point, by Theorem \ref{reach1wasserstein}.
\end{rema}

We can replicate Proposition \ref{proposicion21} for the case where $X$ is isometrically embedded into an Orlicz--Wasserstein space using a more delicate argument.

\begin{prop}\label{proposicion33} 
Let $X$ be a geodesic metric space, and let $x, y\in X$ be two points with $x\neq y$.
Consider the probability measure $\mu= \lambda \delta_x + (1-\lambda) \delta_y$, for $0<\lambda<1$. Then, the following assertions hold: \begin{enumerate} 
\item $\mu$ can only minimize its $\vartheta$--Wasserstein distance to $X$ inside a minimizing geodesic between $x$ and $y$.
\item If $\lambda$ is close to one, and there exists a constant $c>1$ such that $\varphi^{-1}(t)< t $ for every $t>c$, then the minimum will be attained inside the interior of each geodesic.
\end{enumerate}
\end{prop}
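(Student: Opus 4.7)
My plan is to adapt the two-step argument from Proposition~\ref{proposicion21} to the Orlicz setting, exploiting the implicit variational definition of $W_\vartheta$. Under the isometric-embedding hypothesis $\psi \equiv \operatorname{Id}$ and $\varphi(1)=1$, for $\mu = \lambda\delta_x + (1-\lambda)\delta_y$ and $a \in X$ the Wasserstein distance can be written as
\[
W_\vartheta(\delta_a, \mu) = \inf \left\{ T > 0 : \lambda \varphi\!\left(\tfrac{\dist(a,x)}{T}\right) + (1-\lambda)\varphi\!\left(\tfrac{\dist(a,y)}{T}\right) \leq 1 \right\}.
\]
Write $L = \dist(x,y)$ and fix a minimizing geodesic $\gamma : [0,1] \to X$ from $x$ to $y$.

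For Part (1), I would mimic the reduction used for Proposition~\ref{proposicion21}. Given any $a \in X$, I produce a point $a'$ on $\gamma$ with $\dist(a', x) \le \dist(a, x)$ and $\dist(a', y) \le \dist(a, y)$, and with strict inequality in at least one coordinate whenever $a \notin \gamma$: take $a' = x$ if $\dist(a, y) \ge L$, and otherwise take $a'$ on $\gamma$ at distance $\dist(a,y)$ from $y$, so that $\dist(a', x) = L - \dist(a, y) \le \dist(a, x)$ by the triangle inequality, with equality precisely when $a$ lies on a minimizing geodesic from $x$ to $y$. Strict monotonicity of $\varphi$ then converts every admissible $T$ for $a$ into an admissible (and, when $a \notin \gamma$, strictly better) $T$ for $a'$; passing to the infimum yields $W_\vartheta(\delta_{a'}, \mu) \le W_\vartheta(\delta_a, \mu)$, with strict inequality off the geodesic.

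For Part (2), I would study the implicit function $T(t) = W_\vartheta(\delta_{\gamma(t)}, \mu)$, determined by $\lambda \varphi(tL/T) + (1-\lambda)\varphi((1-t)L/T) = 1$ (using $\varphi(0)=0$), so that $T(0) = L/\varphi^{-1}(1/(1-\lambda))$. The goal is to produce $t^* \in (0,1)$ with $T(t^*) < T(0)$. Setting $s = L/T(0)$ so that $(1-\lambda)\varphi(s)=1$ and comparing at the fixed height $T = T(0)$, this reduces to exhibiting small $t>0$ satisfying
\[
\lambda \varphi(ts) < (1-\lambda)\bigl[\varphi(s) - \varphi((1-t)s)\bigr].
\]
The convexity bound $\varphi((1-t)s) \le (1-t)\varphi(s)$ gives $\varphi(s) - \varphi((1-t)s) \ge t\varphi(s)$, so it would suffice to find $t$ with $\lambda \varphi(ts) < t$; the growth hypothesis $\varphi^{-1}(r) < r$ for $r > c$ is what one invokes to control $\varphi(ts)$ in the relevant regime. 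A symmetric argument (swapping the roles of $x$ and $y$, and of $\lambda$ and $1-\lambda$) rules out $t=1$ as a minimizer, so continuity of $T$ on $[0,1]$ forces its minimum into $(0,1)$.

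The main obstacle I anticipate is Part (2). Since $s = \varphi^{-1}(1/(1-\lambda)) \to \infty$ as $\lambda \to 1$, the sufficient condition $\lambda \varphi(ts) < t$ is equivalent to $\varphi(ts)/(ts) < 1/(\lambda s)$, whose right-hand side tends to $0$, so the choice of $t$ must be calibrated carefully. A naive ansatz like $t = \alpha/s$ with $\alpha$ fixed need not succeed; I expect the correct strategy is to let $t$ depend on $\lambda$ (for instance $t \sim (1-\lambda)^\beta$ for a suitable $\beta>0$) and to extract a quantitative comparison between $\varphi$ and a power function from the superlinearity hypothesis $\varphi^{-1}(r)<r$, sharp enough to make the target inequality hold uniformly once $\lambda$ is close enough to $1$.
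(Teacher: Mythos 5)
Your part (1) is correct and is essentially the paper's own argument: given $a$ with $\dist(a,y)\le\dist(x,y)$, place $a'$ on $\gamma$ with $\dist(a',y)=\dist(a,y)$, note $\dist(a',x)\le\dist(a,x)$ with equality only when $a$ lies on a minimizing geodesic, and transfer every admissible parameter $T$ from $a$ to $a'$ using the monotonicity of $\varphi$.

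For part (2), the obstacle you flag is genuine and, more importantly, it is not a matter of calibrating $t$ more cleverly: under the stated hypotheses it cannot be overcome. Since $\varphi$ is convex with $\varphi(0)=0$, the chord slope $v\mapsto\varphi(v)/v$ is nondecreasing, so $\varphi(ts)\ge ts\,\varphi'(0^+)$ and your sufficient condition $\lambda\varphi(ts)<t$ forces $\lambda s\,\varphi'(0^+)<1$; as $s=\varphi^{-1}(1/(1-\lambda))\to\infty$ this rules out \emph{every} choice of $t$ unless $\varphi'(0^+)=0$. Worse, the target inequality itself can fail. Take $\varphi(v)=(v+v^2)/2$: it is convex, strictly increasing, $\varphi(0)=0$, $\varphi(1)=1$, and $\varphi(v)>v$ for all $v>1$, so the hypothesis ``$\varphi^{-1}(t)<t$ for $t>c$'' holds. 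With $u=\varphi^{-1}(1/(1-\lambda))$ and $h(\sigma)=\lambda\varphi(\sigma u)+(1-\lambda)\varphi((1-\sigma)u)$, one checks $h(0)=1$, $h''\equiv u^2>0$, and $h'(0)=\tfrac{u}{2}(2\lambda-1)-\tfrac{2u}{1+u}>0$ once $\lambda$ is close to $1$; hence $h>1$ on $(0,1]$, i.e.\ $W_\vartheta(\delta_{\gamma(\sigma)},\mu)>W_\vartheta(\delta_x,\mu)$ for all $\sigma\in(0,1]$, and the minimum sits at the endpoint $x$ rather than in the interior. The paper's proof gets past this point by asserting that, ``by concavity of $\varphi^{-1}$'', the linear inequality $\lambda\dist(a,x)/t+(1-\lambda)\dist(a,y)/t\le1$ suffices for $\lambda\varphi(\dist(a,x)/t)+(1-\lambda)\varphi(\dist(a,y)/t)\le1$; but Jensen runs the other way for convex $\varphi$ (the Orlicz sum dominates $\varphi$ of the average), and the implication would only be valid if both arguments were at most $1$, whereas $\dist(a,y)/t_0\approx\varphi^{-1}(1/(1-\lambda))\to\infty$. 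So your refusal to close the gap with a slick estimate was the right instinct. The conclusion of (2) does hold when $\varphi(v)=v^p$ (then $W_\vartheta=W_p$ and Proposition \ref{proposicion21} applies), and more generally what is needed is a condition on $\varphi$ near the origin, such as $\varphi'(0^+)=0$ or a comparison between $\varphi'(0^+)$ and $\varphi'(u)/\varphi(u)$ for large $u$, not only the growth condition $\varphi^{-1}(t)<t$ at infinity.
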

\begin{proof}
    First we will see that the minimum can only be attained inside a geodesic. For that purpose, we will replicate the argument in the proof of Proposition \ref{proposicion21}. That is, given $a\in X$, we construct $a'\in\gamma([0, \ell])$, where $\gamma$ is a minimizing geodesic, with 
    \[
    W_\vartheta(\delta_a, \mu) >  W_\vartheta(\delta_{a'}, \mu) .
    \]
    Again, it suffices to consider the case $\dist(a, y) \le \dist(x, y)$. We can pick $a' \in \gamma([0,\ell])$ such that $\dist(a', y) = \dist(a, y)$. Then, $\dist(a', x) < \dist(a, x)$ or $a$ is also inside a minimizing geodesic.

    Let \[
    S=\left\{ t> 0: \lambda \varphi\left( \frac{1}{t} \dist(a, x) \right) + (1-\lambda) \varphi \left( \frac{1}{t} \dist(a, y) \right)  \le 1 \right\}.
    \]As we have only one transport plan $\pi = \delta_a \otimes \mu$, we can write
    \[
    W_\vartheta(\delta_a, \mu) = \inf S.
    \]
    Thus, it is enough to see that, if $t_0$ verifies the inequality inside that infimum for $a$, then it will verify it for $a'$. Indeed,
    \begin{align*}
        1 &\ge \lambda \varphi\left( \frac1{t_0}\dist(a, x) \right) + (1-\lambda) \varphi \left( \frac{1}{{t_0}} \dist(a, y) \right) \\ &= \lambda \varphi\left( \frac1{t_0} \dist(a, x) \right) + (1-\lambda) \varphi \left( \frac{1}{{t_0}} \dist(a', y) \right)\\
        &> \lambda \varphi\left( \frac1{t_0}\dist(a', x) \right) + (1-\lambda) \varphi \left( \frac{1}{{t_0}} \dist(a', y) \right).
    \end{align*}
    The last inequality comes from the monotonicity of $\varphi$, and the assumption $\dist(a', x) < \dist(a,x)$. Observe that, because the previous inequality is strict, we will have a strict inequality in $W_\vartheta(\delta_a, \mu) >  W_\vartheta(\delta_{a'}, \mu) $.

    Now we will prove the second part of our proposition. Assuming $\lambda$ close to 1, and that $\varphi$ differs from the identity for big enough values, we will see that there are points $a \in \gamma((0, \ell) )$ with 
    \begin{equation}\label{ecuacionprop33}
    W_\vartheta(\delta_a, \mu)  \le \min \{W_\vartheta(\delta_{x}, \mu),W_\vartheta(\delta_{y}, \mu) \}. \end{equation}

    First, we observe that the right hand side in inequality \ref{ecuacionprop33} above is easy to compute. Using that $\varphi^{-1}$ is an increasing function,
    \begin{align*}
    W_\vartheta(\delta_x, \mu) &= \inf \left\{ t>0 : (1-\lambda) \varphi\left( \frac{1}{t} \dist(x, y) \right) \le 1 \right\} \\
    &=\inf \left\{ t>0 : \varphi\left( \frac{1}{t} \dist(x, y) \right) \le \frac{1}{1-\lambda} \right\} \\
    &=\inf \left\{ t>0 : \frac{1}{t}  \le \frac{\varphi^{-1} \left( \frac{1}{1-\lambda}\right)}{\dist(x,y)} \right\} 
    \\ &= \inf \left\{ t>0 : \frac{\dist(x,y)}{\varphi^{-1} \left( \frac{1}{1-\lambda}\right)} \le t \right\} \\
    &= \frac{\dist(x,y)}{\varphi^{-1} \left( \frac{1}{1-\lambda}\right)}.
    \end{align*}
Similarly, $W_\vartheta(\delta_y, \mu) =\dfrac{\dist(x,y)}{\varphi^{-1} \left( 1/\lambda\right)} $. If we want $\lambda$ close to one, we can suppose $\lambda > 1- \lambda$. Therefore, $1/{(1-\lambda)} > {1}/{\lambda}$, and because $\varphi^{-1}$ is increasing, 
\[
\varphi^{-1} \left(1/{(1-\lambda)} \right)> \varphi^{-1} \left({1}/{\lambda} \right).
\]
Thus, we know that
\[t_0 := \min \{W_\vartheta(\delta_{x}, \mu),W_\vartheta(\delta_{y}, \mu) \} = \frac{\dist(x,y)}{\varphi^{-1} \left( \frac{1}{1-\lambda}\right)}.\]
Now, we will show that we can find a point inside the geodesic $a = \gamma(s)$, $s\in (0, \ell)$ verifying inequality \eqref{ecuacionprop33}.
It suffices to see that
$t_0 \in S$, 
because $W_\vartheta(\delta_a, \mu)$ is the infimum of $S$ and by definition will be smaller. 
First, observe that, by monotonicity of $\varphi^{-1}$ the inequality defining $S$ is equivalent to
\[
\varphi^{-1} \left( \lambda \varphi\left( \frac1t \dist(a, x) \right) + (1-\lambda) \varphi \left( \frac{1}{t} \dist(a, y) \right) \right) \le \varphi^{-1} (1) = 1.
\]
By concavity of $\varphi^{-1}$, it is enough to have 
\[
\lambda \frac{1}{t} \dist(a, x) + (1-\lambda) \frac{1}{t} \dist(a, y) \le 1.
\]

We will evaluate $t=t_0$ and look for a condition in $s$ so the preceding inequality is verified. Observe that $\dist(a, x) = s$, $\dist(a, y) = \ell - s$ and $\dist(x,y) = \ell$. Then

\begin{align*}
    \lambda \frac{1}{t_0} \dist(a, x) + (1-\lambda) \frac{1}{t_0} \dist(a, y) \le 1 &\iff
    \lambda \frac{s}{\ell} \cdot \varphi^{-1} (1/(1-\lambda)) + (1-\lambda) \frac{\ell-s}{\ell} \cdot \varphi^{-1} (1/(1-\lambda))   \le 1 \\ &\iff \frac{\lambda s}{\ell} + \frac{\ell-s}{\ell} - \lambda \cdot \frac{\ell-s}{\ell} \le \frac{1}{\varphi^{-1} (1/(1-\lambda))} 
    \\ &\iff
    s \cdot ( 2\lambda -1) \le \ell \left( \frac{1}{\varphi^{-1} (1/(1-\lambda))} + 1-\lambda
    \right) \\
    &\iff s \le \ell \left( \frac{1}{\varphi^{-1} (1/(1-\lambda))} -(1-\lambda)
    \right) \cdot (2\lambda-1) ^{-1}.
\end{align*}
If we show that our bound for $s$ is strictly positive, the minimum will be attained inside the geodesic and we will finish the proof. Choosing $\lambda$ close enough to 1, we have $(2\lambda-1) >0$ and $1/(1-\lambda) > c$. Therefore, $\varphi^{-1}( 1/(1-\lambda)) - 1/(1-\lambda) < 0 $ and, because the function $t \mapsto 1/t$ is decreasing,  $\dfrac1{\varphi^{-1}( 1/(1-\lambda))} - (1-\lambda) > 0 $ and we have finished our proof.
\end{proof}
An immediate consequence from our proposition is the following theorem, providing us with examples of manifolds with zero reach inside their Orlicz--Wasserstein space:

\begin{thm}{\label{reachceroorlicz}}
Let $X$ be a geodesic metric space, and $x\in X$ a point such that there exists another $y\in X$ with the property that there exists at least two different minimising geodesics from $x$ to $y$. Suppose $X$ is isometrically embedded into an Orlicz-Wasserstein space $W_\vartheta(X)$. Then, for every $\varphi$ such that $\varphi(t_0) \neq t_0$ for some $t_0>1$, \[
\reach(x, X\subset W_\vartheta(X))=0.
\]    
In particular, if there exists a point $x\in X$ satisfying that property, $\reach(X\subset W_\vartheta(X))=0$ for every $p>1$. Also, in compact manifolds and non-simply connected manifolds, $\reach(x, X\subset W_\vartheta(X)) = 0$ for every $x\in X$.
\end{thm}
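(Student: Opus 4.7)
The plan is to replicate the proof strategy of Theorem \ref{teorema22} in the Orlicz--Wasserstein setting, with Proposition \ref{proposicion33} playing the role that Proposition \ref{proposicion21} played in the $p$--Wasserstein case. Fix two distinct minimizing geodesics $\gamma_1,\gamma_2$ from $x$ to $y$, and for $\lambda$ close to $1$ consider the family
\[
\mu_\lambda := \lambda\,\delta_x + (1-\lambda)\,\delta_y.
\]
The aim is to check that (i) each $\mu_\lambda$ admits at least two distinct projections onto $X$, and (ii) $W_\vartheta(\delta_x,\mu_\lambda) \to 0$ as $\lambda \to 1$; together these force measures with non-unique projection to accumulate at $\delta_x$, whence $\reach(x, X\subset W_\vartheta(X))=0$.

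Before invoking Proposition \ref{proposicion33}(2) for (i), the first task is to translate the standing hypothesis ``$\varphi(t_0)\neq t_0$ for some $t_0>1$'' into the proposition's hypothesis ``$\varphi^{-1}(t)<t$ for every $t>c$'' for some $c>1$. Here I would use the structural information on $\varphi$ coming from Sturm's Orlicz setup together with the isometry requirements $\psi=\mathrm{Id}$ and $\varphi(1)=1$: since $\varphi$ is convex with $\varphi(0)=0$, the ratio $\varphi(t)/t$ is non-decreasing on $(0,\infty)$, and normalization at $t=1$ forces $\varphi(t)\ge t$ for every $t\ge 1$. Consequently the hypothesis must actually read $\varphi(t_0)>t_0$, and the monotonicity of $\varphi(t)/t$ above $t_0$ yields $\varphi(t)>t$, equivalently $\varphi^{-1}(t)<t$, for all $t\ge t_0$; take $c=t_0$.

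With this in hand, Proposition \ref{proposicion33}(1) confines every projection of $\mu_\lambda$ to the union of minimizing geodesics from $x$ to $y$, while Proposition \ref{proposicion33}(2) exhibits a projection in the interior of each such geodesic for $\lambda$ sufficiently close to $1$. The two distinct geodesics $\gamma_1$ and $\gamma_2$ thus contribute two distinct projections of $\mu_\lambda$, proving (i). For (ii) I would reuse the explicit evaluation carried out inside the proof of Proposition \ref{proposicion33}, namely
\[
W_\vartheta(\delta_x,\mu_\lambda) \;=\; \frac{\dist(x,y)}{\varphi^{-1}\!\bigl(1/(1-\lambda)\bigr)}.
\]
Since the bound $\varphi(t)\ge t$ for $t\ge 1$ already makes $\varphi$ unbounded and hence $\varphi^{-1}$ unbounded as well, the right-hand side tends to $0$ as $\lambda\to 1$, giving $\reach(x, X\subset W_\vartheta(X))=0$.

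The two ``in particular'' consequences, for compact manifolds and for non-simply-connected complete manifolds, then drop out exactly as in Corollaries \ref{corollarycompact} and \ref{corollarynotsimply}: Berger's theorem provides, at every basepoint of a compact Riemannian manifold, a target joined to it by two minimizing geodesics, and the universal-cover construction produces, around every basepoint of a non-simply-connected complete manifold, a geodesic loop whose two halves are distinct minimizing geodesics sharing endpoints. I expect the only delicate point to be the translation step above --- squeezing ``$\varphi^{-1}(t)<t$ for large $t$'' out of the bare single-point deviation ``$\varphi(t_0)\neq t_0$'' --- because it is the one place where the global convexity, non-negativity and normalization of $\varphi$ have to be explicitly wielded; once that is done, the remainder of the argument is formal and tracks Theorem \ref{teorema22} almost verbatim.
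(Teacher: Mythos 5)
Your proposal is correct and follows essentially the same route as the paper: reduce to Proposition \ref{proposicion33} by showing the hypothesis $\varphi(t_0)\neq t_0$ yields $\varphi^{-1}(t)<t$ for large $t$, then run the argument of Theorem \ref{teorema22} with $\mu_\lambda=\lambda\delta_x+(1-\lambda)\delta_y$ and $\lambda\to 1$. If anything, your translation step is more complete than the paper's one-line justification, since by using $\varphi(0)=0$, convexity and $\varphi(1)=1$ to get $\varphi(t)\ge t$ on $[1,\infty)$ you explicitly rule out the case $\varphi(t_0)<t_0$, which the paper's phrase ``convexity and $\varphi(1)=1$ imply $\varphi(t)>t$ for every $t>t_0$'' silently assumes away.
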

\begin{proof}
    The proof is identical to the one from Theorem \ref{teorema22}. It remains to see that $\varphi(t_0) \neq t_0$ implies the condition we ask for in Proposition \ref{proposicion33}. Indeed, the convexity and $\varphi(1) = 1$ imply $\varphi(t) > t$ for every $t>t_0$. And, because $\varphi^{-1}$ is increasing, we also have $t > \varphi^{-1}(t)$ for every $t>t_0$, which is what we need to apply Proposition \ref{proposicion33}. 
\end{proof}

\subsection{Positive Reach}
Similarly to the $p$--Wasserstein case, several results by Kell \cite{kell} imply that reflexive $\mathrm{CAT}(0)$--spaces inside some Orlicz--Wasserstein spaces have infinite reach.

\begin{thm}{\label{cat0orlicz}}
    Let $(X,\dist)$ be a reflexive $\mathrm{CAT}(0)$--space. Suppose $\varphi$ is a convex function which can be expressed as $\varphi(r) = \psi(r^p)$, where $\psi$ is another convex function and $p>1$. Then \begin{equation}{\label{reachpositivoorlicz}}
        \reach(X\subset W_{\vartheta}(X))=\infty,
    \end{equation}where $\psi\equiv\identity$ and $\varphi(1)=1$.
\end{thm}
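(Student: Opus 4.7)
The plan is to establish that every measure $\mu \in W_\vartheta(X)$ admits a unique $\vartheta$--barycenter, which by the same reasoning used in Theorem \ref{reachpositivowass} will yield $\reach(X\subset W_\vartheta(X))=\infty$. Because the only transport plan between $\delta_x$ and $\mu$ is $\delta_x\otimes \mu$, the map to study is
\[
F(x):=W_\vartheta(\mu,\delta_x)=\inf\left\{t>0 : \int_X \varphi\!\left(\tfrac{1}{t}\,\dist(x,y)\right)d\mu(y)\le 1\right\}.
\]
Since $(X,\dist)$ is a reflexive $\mathrm{CAT}(0)$ space, it is Busemann and strictly (in fact uniformly) $p$--convex for every $p\in[1,\infty]$, and the decomposition $\varphi(r)=\psi(r^p)$ with $\psi$ convex and strictly increasing (the latter because $\vartheta=\varphi$ is strictly increasing and $r\mapsto r^p$ is a bijection on $\RR^+$) will be the bridge that transfers the $p$--convexity of $X$ to convexity estimates for $F$.

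For existence I will follow Kell's strategy (\cite[Theorem 4.4]{kell}). The sublevel sets $\{F\le c\}$ are convex: if $F(x_1),F(x_2)\le c$, then by monotonicity of $\varphi$ both $x_1,x_2$ satisfy $\int \varphi(\dist(x_i,y)/c)\,d\mu(y)\le 1$, and writing $m=m(x_1,x_2)$, the strict $p$--convexity of the $\mathrm{CAT}(0)$ space gives $\dist(m,y)^p\le \tfrac12(\dist(x_1,y)^p+\dist(x_2,y)^p)$. Applying first the monotonicity and then the convexity of $\psi$ yields
\[
\varphi\!\left(\tfrac{\dist(m,y)}{c}\right)=\psi\!\left(\tfrac{\dist(m,y)^p}{c^p}\right)\le \tfrac12\,\varphi\!\left(\tfrac{\dist(x_1,y)}{c}\right)+\tfrac12\,\varphi\!\left(\tfrac{\dist(x_2,y)}{c}\right),
\]
which integrates to a quantity $\le 1$, so $F(m)\le c$. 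Then a minimizing sequence generates a nested family of bounded, closed, convex sets $C_n=\{F\le \inf F + 1/n\}$, and reflexivity produces a point in $\bigcap_n C_n$, which is a minimizer.

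For uniqueness, suppose $x_1\ne x_2$ are both minimizers with common value $t_0>0$ (the case $t_0=0$ forces $\mu=\delta_{x_i}$ and hence $x_1=x_2$). Let $m=m(x_1,x_2)$. In a $\mathrm{CAT}(0)$ space the strict $p$--convexity inequality $\dist(m,y)^p<\tfrac12(\dist(x_1,y)^p+\dist(x_2,y)^p)$ holds \emph{strictly} for every $y$ since $x_1\ne x_2$. Using that $\psi$ is strictly increasing and convex,
\[
\varphi\!\left(\tfrac{\dist(m,y)}{t_0}\right)<\tfrac12\,\varphi\!\left(\tfrac{\dist(x_1,y)}{t_0}\right)+\tfrac12\,\varphi\!\left(\tfrac{\dist(x_2,y)}{t_0}\right)\quad\text{for every }y\in X.
\]
Integrating this strict pointwise inequality against the probability measure $\mu$ gives $\int \varphi(\dist(m,y)/t_0)\,d\mu(y)<1$, so by continuity of the integral in $t$ there is some $t<t_0$ for which the constraint still holds, giving $F(m)<t_0$ and contradicting the minimality of $t_0$. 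Therefore the barycenter is unique, the projection map $\proj_\vartheta$ is defined on all of $W_\vartheta(X)$, and $\reach(X\subset W_\vartheta(X))=\infty$.

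The main obstacle will be making the lower semicontinuity argument and the infimum-vs-minimum passage precise in the existence step, since the outer infimum defining $W_\vartheta$ need not be attained in full generality; however, monotonicity and continuity of $t\mapsto \int \varphi(\dist(x,y)/t)\,d\mu(y)$ plus dominated convergence reduce this to a standard check. Once this is in place, the key computation is the one above, where the decomposition $\varphi(r)=\psi(r^p)$ is exactly what lets us convert the $\mathrm{CAT}(0)$ strict $p$--convexity on $X$ into a strict convexity of the Orlicz constraint functional.
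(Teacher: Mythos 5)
Your argument is correct and follows essentially the same route as the paper: the paper's proof simply cites Kell's Lemma A.2 (strict $p$--convexity of the $\mathrm{CAT}(0)$ space plus the decomposition $\varphi(r)=\psi(r^p)$ gives strict Orlicz $\varphi$--convexity) and Theorem A.4 (existence and uniqueness of Orlicz barycenters for reflexive, strictly Orlicz-convex spaces), and your midpoint computation together with the reflexivity/strict-convexity existence--uniqueness argument is precisely the content of those two cited results, unpacked. The only details left implicit (boundedness of the sublevel sets $C_n$ and the monotone/dominated convergence checks for the constraint functional at $t=t_0$) are standard and correctly flagged.
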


\begin{proof}
    As we pointed in the proof of Corollary \ref{reachinfinitocat0}, $\mathrm{CAT}(0)$ spaces are strictly $p$-convex, so by \cite[Lemma A.2.]{kell} they are strictly Orlicz $\varphi$-convex. Thus, the result is derived directly from \cite[Theorem A.4.]{kell} which confirms the existence of unique barycenters for every $\mu\in W_{\vartheta}(X)$.
\end{proof}

\begin{rema}
    All proper metric spaces (i. e., those where every bounded closed set is compact) is reflexive \cite{huff, kell}. As Caprace pointed out in \cite{caprace}, \textit{symmetric spaces} of non--compact type (i.e. with non-positive sectional curvature and no non-trivial Euclidean factor) and \textit{Euclidean buildings} are proper $CAT(0)$ spaces and are examples for which Theorem \ref{cat0orlicz} holds.
\end{rema}

\section{Reach of the Persistence Diagram space}{\label{reachpersistence}}

In \cite[Theorem 19]{bubenik}, Bubenik and Wagner construct an explicit isometric embedding of bounded separable metric spaces into $(\dgm_\infty,w_\infty)$. \begin{align*}
    \varphi:(X,\dist)&\to(\dgm_\infty,w_\infty)\\
    x&\mapsto\{(2c(k-1),2ck+\dist(x,x_k))\}_{k=1}^\infty,
\end{align*}where $c>\diam(X)=\sup\{\dist(x,y)\colon x,y\in X\}$ and $\{x_k\}_{k=1}^\infty$ is a countable, dense subset of $(X,\dist)$. The authors stated that this embedding can be thought of as a shifted version of the Kuratowski embedding (for more information about this embedding see \cite{cuerno}).

\begin{thm}{\label{reachPD}}
   Let $(X,\dist)$ be a separable, bounded metric space and $(\dgm_{\infty},w_\infty)$ the space of persistence diagrams with the bottleneck distance. If $x\in X$ is an accumulation point, then \[
    \reach(x, X\subset \dgm_\infty)=0.\] 
    In particular, if $X$ is not discrete, $\reach(X\subset \dgm_\infty)=0.$
\end{thm}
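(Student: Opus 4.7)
The plan is to mimic the strategy used for $W_1(X)$ in Theorem~\ref{reach1wasserstein}: construct, inside any ball $B_\epsilon(\varphi(x))$ in $(\dgm_\infty, w_\infty)$, an explicit diagram that is equidistant from $\varphi(x)$ and $\varphi(y)$ for a nearby point $y$, thus witnessing non-uniqueness of the projection.

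First I would recall the explicit Bubenik--Wagner embedding
\[
\varphi(x) = \bigl\{\bigl(2c(k-1),\, 2ck + \dist(x,x_k)\bigr)\bigr\}_{k=1}^{\infty},
\]
where $c>\diam(X)$ and $\{x_k\}$ is a countable dense subset of $X$. Since $x$ is an accumulation point, for any $\epsilon>0$ I can pick $y\in X$ with $0<\dist(x,y)<\epsilon$. Then I would define the \emph{averaged diagram}
\[
D \;=\; \Bigl\{\Bigl(2c(k-1),\, 2ck + \tfrac{\dist(x,x_k)+\dist(y,x_k)}{2}\Bigr)\Bigr\}_{k=1}^{\infty}.
\]
Because $X$ is bounded, $\tilde w_\infty(D,\emptyset)<\infty$, so $D\in\dgm_\infty$.

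The computational heart of the argument is showing that the obvious $k$-to-$k$ partial matching is optimal for $w_\infty(D,\varphi(z))$, for every $z\in X$. The key observation is that the first coordinates are locked at $2c(k-1)$, and any cross-matching ($k\mapsto j$, $k\neq j$) incurs a bottleneck cost at least $2c > 2\diam(X)$, while the diagonal distance of every point in $D$ or $\varphi(z)$ exceeds $c > \diam(X)$. Since the $k$-to-$k$ cost is bounded above by $\diam(X)$, all other matchings are strictly more expensive. Thus for every $z$,
\[
w_\infty(D,\varphi(z)) \;=\; \sup_k\, \Bigl|\tfrac{\dist(x,x_k)+\dist(y,x_k)}{2} - \dist(z,x_k)\Bigr|.
\]
Using density of $\{x_k\}$, one evaluates the sup at sequences $x_k\to x$, $x_k\to y$, $x_k\to z$, and combines with the triangle inequality to obtain
\[
w_\infty(D,\varphi(x)) = w_\infty(D,\varphi(y)) = \tfrac{\dist(x,y)}{2}, \qquad w_\infty(D,\varphi(z)) \geq \tfrac{\dist(x,z)+\dist(y,z)}{2} \geq \tfrac{\dist(x,y)}{2}.
\]
Consequently both $\varphi(x)$ and $\varphi(y)$ realise $\dist(D,\varphi(X))$, so $D\notin\unp(\varphi(X))$, and $w_\infty(D,\varphi(x))=\dist(x,y)/2<\epsilon$, which yields $\reach(\varphi(x),\varphi(X))=0$. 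If $X$ is not discrete, every point is an accumulation point and the global statement follows.

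The main obstacle is the optimality check for the $k$-to-$k$ matching: the bottleneck distance is an $\inf\!\sup$ over \emph{partial} matchings, so one must carefully rule out both permuting the indices and sending points to the diagonal. This is exactly where the shift by $2c > \diam(X)$ in the Bubenik--Wagner embedding is essential; it ensures that the ambient ``noise'' in the first coordinate dwarfs all intrinsic distances in $X$, so only the diagonal $k$-to-$k$ assignment is competitive. Once this geometric separation is established, the rest reduces to a routine triangle-inequality argument analogous to the $W_1$ case.
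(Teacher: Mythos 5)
Your proposal is correct and follows essentially the same route as the paper: the same averaged diagram $P=\{(2c(k-1),\,2ck+\tfrac{1}{2}(\dist(x,x_k)+\dist(y,x_k)))\}_k$, the same optimality check that the $2c$-shift forces the $k$-to-$k$ matching (cross-matchings cost at least $2c$ and diagonal deletions at least $c$, both exceeding $\dist(x,y)/2$), and the same triangle-inequality lower bound $w_\infty(\varphi(z),P)\ge\tfrac{1}{2}(\dist(x,z)+\dist(y,z))$ via $x_k\to z$. No gaps; nothing further to add.
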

\begin{proof}
    For every two points $x, y\in X$, we can construct a persistence diagram $P$ with at least those two points minimizing the bottleneck distance from the diagram $P$ to the embedded space $\varphi(X)$. That $P$ will be a midpoint between $\varphi(x)$ and $\varphi(y)$, so by choosing $y$ arbitrarily close to $x$, we will have a diagram with several barycenters ($x$ and $y$) that is also arbitrarily close to $x$. Therefore, $ \reach(x,X\subset \dgm_\infty)=0$ for every accumulation point $x\in X$, and, thus, $\reach(X\subset\dgm_\infty)=0$. 

    Then, it suffices to prove our first claim. For $x, y \in X$, choose the diagram
    \[P = \left\{\left(2c(k-1), 2ck + \frac{\dist(x, x_k) + \dist(y, x_k)}{2}\right)\right\}_{k=1}^\infty.\]
    Now, observe that
    \begin{align*}
    w_\infty(\varphi(x), P) &= 
    \sup_{k\in \mathbb{N}}\left| \dist(x, x_k) - \frac{\dist(y, x_k) + \dist(x, x_k)}{2} \right|
   \\ &=\sup_{k\in \mathbb{N}} \frac{|\dist(x, x_k) - \dist(y, x_k)|}{2} = \frac12 w_\infty(\varphi(x), \varphi(y)) = \frac {\dist(x,y)}{2}.
    \end{align*}
    And, by a symmetric argument,
    \[w_\infty(\varphi(y), P)=\frac {\dist(x,y)}{2}.\]
    Note that, similarly to the end of the proof of \cite[Theorem 19]{bubenik}, any other pairing between points of the diagrams would pair two points from different vertical lines. Those points would be at distance at least $2c$. On the other hand, any possibly unpaired points are at distance at least $c$ from the diagonal. So those pairings would have a cost bigger than $c>\dist(x,y)/2$, and therefore we always pair points in the same vertical lines.
    
    Now, if $z\in X$, we will see that $P$ is at distance at least $\frac12 \dist(x,y)$ from $z$. Indeed, we can give a lower bound for the distance simply by ommiting the supremum:    \begin{align*}
     w_\infty(\varphi(z), P) &= \sup_{k\in \mathbb{N}} \left|\dist(z, x_k) - \frac{\dist(x, x_k) + \dist(y, x_k)}{2} \right|\\&\ge \left|\dist(z, x_k) - \frac{\dist(x, x_k) + \dist(y, x_k)}{2} \right|.
        \end{align*}
    Looking at $x_k$ arbitrarily close to $z$, we get that
    \[
    w_\infty(\varphi(z), P) \ge \left| \frac{\dist(x, z) + \dist(y, z)}{2}\right| \ge \frac{\dist(x,y)}{2}.
    \]
    This proves that $P$ is not in the image of $\varphi$, and that $\varphi(x), \varphi(y)$ both minimize the distance from $P$ to $\varphi(X)$, as we wanted to see.
\end{proof}


    
\nocite{*}
\printbibliography
\end{document}